\pgfplotsset{compat=1.18} 
\newtheorem{lem}{Lemma}[section]
\newtheorem{prop}[lem]{Proposition}
\newtheorem{thm}[lem]{Theorem}
\newtheorem{cor}[lem]{Corollary}
\newtheorem{defn}[lem]{Definition}
\theoremstyle{definition}
\newtheorem{ex}[lem]{Example}
\newtheorem{rem}[lem]{Remark}
\newtheorem*{convention}{Convention}
\DeclareMathOperator{\id}{id}
\DeclareMathOperator{\Inj}{Inj}
\DeclareMathOperator{\Ret}{Ret}
\DeclareMathOperator{\Soc}{Soc}
\DeclareMathOperator{\Sym}{Sym}
\DeclareMathOperator{\T}{\mathcal{S}}
\newcommand{\perm}{\mathcal{S}}
\title[Cabling of non-involutive solutions]{On the cabling of non-involutive set-theoretic solutions of the Yang--Baxter equation}
\author{I. Colazzo and A. Van Antwerpen}
\date{}
\address[I. Colazzo (ORCID: 0000-0002-2713-0409)]{School of Mathematics, University of Leeds, Leeds LS2 9JT, UK}
\email{I.Colazzo@leeds.ac.uk}
\address[A. Van Antwerpen (ORCID: 0000-0001-7619-6298)]
{Department of Mathematics and Statistics, National University of Ireland - Maynooth, Maynooth, Ireland}
\email{Arne.VanAntwerpen@ugent.be}
\subjclass[2020]{Primary: 16T25; Secondary: 20N99, 08A05}
\keywords{
Yang--Baxter equation, indecomposable solutions, simple solutions, skew braces, diagonal map, Dehornoy class, biquandles.
}
\begin{document}

\begin{abstract}

We extend the cabling method by Lebed, Ramírez and Vendramin from involutive to bijective non-degenerate set-theoretic solutions of the Yang--Baxter equation by working in the Yang--Baxter monoid $M(X,r)$ rather than the group $G(X,r)$. This shift in approach overcomes the obstruction that, for non-involutive solutions, the canonical map from $X$ to the Yang--Baxter group $G(X,r)$ need not be injective and yields a well-defined cabling. We prove that cabling is functorial on biquandles and that the diagonal map transforms as $q\mapsto q^k$. We also show that decomposability is preserved by injectivization and by passing to the associated biquandle, allowing us to work within that class without loss of generality. This leads to criteria for (in)decomposability. As an application, we obtain that square-free solutions with nilpotent derived monoid are decomposable.
\end{abstract}

\maketitle

\section{Introduction}

The Yang–Baxter equation has been widely studied for over six decades for its central role across research areas spanning from pure mathematics (e.g., knot theory and quantum groups) to physics (e.g., statistical mechanics and quantum field theory). Originally arising in statistical mechanics and quantum field theory, the Yang–Baxter equation has deep connections with algebra \cite{MR1321145,MR1183474}, geometry, knot theory \cite{MR908150,MR638121}, and quantum computing \cite{MR1943131}, among other fields.

In 1992, Drinfeld suggested studying the set-theoretic version of the Yang--Baxter equation. Concretely, a \emph{set-theoretic solution} is a pair $(X,r)$ where $X$ is a non-empty set and $r:X\times X \rightarrow X\times X$ is a map such that on $X^3$ one has that $$(r\times \id_X)(\id_X \times r)(r \times \id_X) = (\id_X \times r)(r \times \id_X)(\id_X \times r).$$ 
Early significant contributions to understanding set-theoretic solutions of the Yang--Baxter equation were made in \cite{MR1637256,MR1722951,LYZset}. These pioneering works systematically explored the combinatorial aspects of solutions to the Yang--Baxter equation, revealing connections between solutions, bijective 1-cocycles, monoids of $I$-type, group actions, and braiding operators. In knot theory, set-theoretic solutions to the Yang–Baxter equation emerge quite naturally, leading to the development of various algebraic structures such as quandles, racks, biquandles, and biracks.

Let us denote $r(x,y)=(\lambda_x(y),\rho_y(x))$, where for all $x\in X$, $\lambda_x, \rho_x\colon X \to X$ are maps. 
An important and widely studied class of set-theoretic solutions is that of bijective non-degenerate, see for instance \cite{MR2132760,MR3177933,MR3974961, MR4621959,MR3861714}.
A set-theoretic solution $(X,r)$ is called \emph{bijective non-degenerate} if $r$ is a bijective map and, for any $x \in X$, the maps $\lambda_x$ and $\rho_x$ are bijective. If in addition, for all  $x \in X$ one has that $r(x,x)=(x,x)$, then $(X,r)$ is said to be \emph{square-free}.

Various algebraic structures have been introduced to produce and study set-theoretic solutions of the Yang–Baxter equation. Among them, skew braces --- introduced in \cite{MR2278047,MR3647970} --- stand out as an ideal algebraic framework for studying bijective non-degenerate solutions. Since their introduction, skew braces have found applications across several areas of mathematics, including connections to triply factorizable groups \cite{MR4427114}, Garside theory \cite{MR3374524}, ring theory \cite{colazzo2023structure}, pre-Lie algebras \cite{MR4484785}, Rota--Baxter operators \cite{MR4370524}, and Hopf--Galois structures \cite{MR3763907}.

Recall that a triple $(B,+,\circ)$ is called a \emph{skew brace}, if $(B,+)$ and $(B,\circ)$ are groups such that for any $a,b,c \in B$, one has that $$a \circ (b+c) = (a\circ b) - a + (a\circ c),$$ where $-a$ denotes the inverse of $a$ in $(B,+)$. Any skew brace $(B,+,\circ)$ determines a set-theoretic solution $(B,r)$ given by $$r(a,b)=(-a+a\circ b, (a'+b)'\circ b),$$ where $a'$ denotes the inverse of $a$ in the group $(B,\circ)$.
Conversely, given a bijective non-degenerate set-theoretic solution to the Yang--Baxter equation $(X,r)$ two important skew braces can be associated to $(X,r)$: the \emph{structure (or Yang--Baxter) skew brace} $G(X,r)$ and the \emph{permutation skew brace} $\mathcal{G}(X,r)$ (see Section~\ref{sec:prelim} for definitions).

Currently a complete classification of all solutions is a far-reaching goal. One approach is to identify classes of solutions that serve as building blocks. Two such classes are indecomposable and simple solutions.

A bijective non-degenerate solution $(X,r)$ is said to be \emph{decomposable} if there is a partition $X= Y\cup Z$ with $Y,Z\neq \emptyset$ of $X$ such that $r(Y\times Y)\subseteq Y\times Y$ and $r(Z\times Z)\subseteq Z\times Z$. Otherwise $(X,r)$ is \emph{indecomposable}.

One of the first results proving decomposability for a class of non-degenerate solutions is due to Rump, who solved a conjecture of Gateva-Ivanova: in \cite{MR2132760} he proved that every involutive square-free solution is decomposable.

In \cite{MR4514462}, Ramírez and Vendramin associated to an involutive solution $(X,r)$ a permutation $q:X\to X$, given by $q(x)=\lambda_x^{-1}(x)$, which controls decomposability. More generally, Camp-Mora and Sastriques \cite{MR4565655} provided a criterion for the decomposability of involutive solutions. Following this path, Lebed, Ramírez, and Vendramin \cite{cablingorig} defined a cabling construction for involutive solutions and provided a brace-theoretic framework for the results of \cite{MR4565655}, unifying the numerical criteria available at the time and yielding new ones.

A bijective non-degenerate solution $(X,r)$ is said to be \emph{simple} if  any epimorphic image of $(X,r)$ 
is either isomorphic to $(X,r)$ or to a solution $(Y,s)$ with $Y$ a singleton.
The definition of simple solution traces back to \cite{MR1994219}, which refined the classification of simple racks by giving a concrete characterization of the underlying groups.  For involutive non-degenerate
solutions, in \cite{MR3437282}, Vendramin introduced a notion of simple involutive non-degenerate solutions. This coincides with the above definition only within the class of finite indecomposable non-degenerate solutions. Further developments include families of simple involutive solutions constructed by Cedó and Okniński \cite{MR4300920} and a group-theoretic treatment of simplicity for quandles due to Joyce \cite{MR0682881}. More recently, characterizations of simplicity in the skew left brace framework were obtained for involutive non-degenerate solutions in \cite{Cast22} and for bijective non-degenerate solutions in \cite{CJKAV24x}.

In this paper, we extend the cabling method to bijective non-degenerate solutions, with applications to indecomposable and simple solutions. There is no trivial extension to the non-involutive case; two main issues arise. First, although the canonical map $i: X\to G(X,r)$ is always injective for involutive solutions, it need not be injective for non-involutive ones. 
Moreover, we consider the biquandle associated to a bijective non-degenerate solution (cf. \cite[Definition 1.2]{MR3974961}) which is an intermediate solution sitting in between the solution and its injectivization. In particular, any involutive solution is clearly also a biquandle.  
We overcome this first obstacle by proving that indecomposability can be read off from the injectivization, Theorem~\ref{injdecomp:thm}, or from the associated biquandle, Theorem~\ref{thm:bddecomp}. The latter is a crucial result since our techniques require us to work with biquandles. 

The other problem is that for an involutive non-degenerate solution $(X,r)$ and positive integer $k$ the canonical map $i_k:X\rightarrow G(X,r)$ with $i_k(x)=kx$ is an embedding. However, this may fail even for injective solutions.
Consequently, it is unclear how to pull back the subsolution of $(G(X,r),r_G)$ on these subsets to a well-defined solution on $X$. 
We resolve this by working with the canonical embedding into the Yang---Baxter monoid $M(X,r)$, the monoid analogue of the structure skew brace, see Proposition~\ref{prop:cabledexist}. 
Finally, we prove a further structural property of cabling: it preserves morphisms of solutions, cf. Proposition~\ref{cablingendofunctor:pro}. As an application, simplicity and indecomposability are preserved under suitable cabling, see Proposition~\ref{cablingsimpleindec:pro}. 

The structure of the paper is as follows. 
Section~\ref{sec:prelim} reviews the necessary background on bijective non-degenerate solutions, biquandles, racks and quandles and the relations among them. We also recall the definitions of the Yang--Baxter group and monoid, the cancellative congruence in the latter and the definition of the Dehornoy class of a solution.

In Section~\ref{sec:dec}, we show that injectivization preserves the existence of certain homomorphisms. In particular, we show that decomposability is preserved under injectivization. Moreover, we prove that decomposability is invariant under taking the biquandle associated to a solution. 

In the final section we apply our methods to extend the theorems of Rump and of Camp-Mora and Sastriques to bijective non-degenerate set-theoretic solutions. We first show, under a mild assumption, that a square-free solution is decomposable. We then establish relations between a solution and its cabling; the main result is Theorem~\ref{thm:diagonal}, which relates the diagonal map of a solution to that of its cabling and, in the spirit of \cite[Section~4]{cablingorig}, yields numerical criteria for indecomposability.

\section{Preliminaries}\label{sec:prelim}

\begin{convention}
    In this paper, we abbreviate ``finite bijective non-degenerate solution" to ``\emph{solution}". 
\end{convention}

In this section we recall the necessary background on racks and biquandles. Moreover, we recall the structure of the Yang--Baxter monoid and its cancellative congruence.

A particular class of solutions stem from racks introduced by Joyce in \cite{MR638121} in the context of coloring knots. 
\begin{defn}
A \emph{rack} is a tuple $ (S, \triangleleft)$ with $S$ a non-empty set, $\triangleleft: S \times S \rightarrow S$ an operation such that 
\begin{enumerate}
    \item for any $x \in S$, one has that $\cdot\triangleleft x:S \rightarrow S$ is a bijection,
    \item for any $x,y,z \in S$, it holds that $ (x \triangleleft y)\triangleleft z = (x \triangleleft z)\triangleleft (y \triangleleft z)$.
\end{enumerate}
If furthermore one has that $x \triangleleft x = x$ for all $x \in S$, then $(S,\triangleleft)$ is called a \emph{quandle}. 
\end{defn}
Every rack $(S,\triangleleft)$ gives rise to a solution $(S,r)$, via $$ r(x,y)=(y,x\triangleleft y).$$ 
In the following we will use rack (or quandle) also to refer to the associated solution. Note that a quandle is precisely a square-free rack.
Conversely, it was shown in \cite{MR1809284} that every solution $(X,r)$ has an associated rack $(X,\triangleleft)$, where 
$$x \triangleleft y = \lambda_y\left( \rho_{\lambda_x^{-1}(y)}(x)\right).$$ 
This is called the (left) \emph{derived rack} of $(X,r)$; the corresponding solution is the \emph{derived solution} of $(X,r)$. Its meaning, with respect to $(X,r)$, is illustrated in Figure~\ref{square:fig}, where crossing two strands represents applying $r$.

\begin{figure}[h!]
    \centering
    \begin{tikzpicture}
\braid[
number of strands=2,
height=1cm,
gap=0.1,
nudge factor=.001,
yscale=1] 
(braid) s_1 s_1;
\node (x) at ([xshift = -.7 cm, yshift = -1.2 cm] braid) {$x$};
\node (y) at ([xshift = -.7 cm] braid) {$y$};
\node (l) at ([xshift = -1 cm,  yshift = 1.2 cm] braid) {$x \triangleleft y$};
\end{tikzpicture}
    \caption{Pictorial representation of the derived rack $(X, \triangleleft)$.}
    \label{square:fig}
\end{figure}
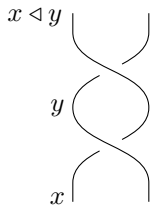

For knot colorings, the solutions of particular interest are \emph{biquandles}, as these are precisely the solutions that satisfy the Reidemeister~I move. Biquandles will play a major role in the remainder of the paper.

\begin{defn}
    Let $(X,r)$ be a solution. Then, $(X,r)$ is called a \emph{biquandle}, if the derived rack of $(X,r)$ is a quandle.
\end{defn}

In \cite{MR3974961}, it is noted that if the (left) derived rack of a solution $(X,r)$ is a quandle, then so is the right derived rack. For completeness' sake, we provide a short proof.

\begin{prop}
    Let $(X,r)$ be a biquandle. Then, the right derived rack of $(X,r)$ is a quandle.
\end{prop}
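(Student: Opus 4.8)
The plan is to translate both the hypothesis and the desired conclusion into statements about a single ``diagonal'' set, namely the fixed-point set of $r$, and then to play the two coordinate projections of this set against each other using finiteness. Set $F=\{(x,y)\in X\times X : r(x,y)=(x,y)\}$. Unwinding $r(x,y)=(\lambda_x(y),\rho_y(x))$, a pair $(x,y)$ lies in $F$ if and only if $\lambda_x(y)=x$ and $\rho_y(x)=y$. The first equation forces $y=\lambda_x^{-1}(x)$, so the projection $\pi_1\colon F\to X$, $(x,y)\mapsto x$, is injective; the second equation forces $x=\rho_y^{-1}(y)$, so $\pi_2\colon F\to X$, $(x,y)\mapsto y$, is injective as well.

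First I would show that the biquandle hypothesis is exactly the surjectivity of $\pi_1$. By definition $(X,r)$ is a biquandle means $x\triangleleft x=x$ for all $x$, i.e. $\lambda_x\bigl(\rho_{\lambda_x^{-1}(x)}(x)\bigr)=x$. Writing $u=\lambda_x^{-1}(x)$ and using that $\lambda_x$ is a bijection with $\lambda_x(u)=x$, this simplifies to $\rho_u(x)=u$, which is precisely the assertion that $(x,u)=(x,\lambda_x^{-1}(x))\in F$. Hence the left quandle condition says that every $x\in X$ occurs as a first coordinate of an element of $F$, that is, $\pi_1$ is onto (and, with the injectivity noted above, a bijection).

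Next, an entirely analogous unwinding identifies the conclusion with the surjectivity of $\pi_2$: writing $\triangleright$ for the right derived rack operation, the quandle condition $x\triangleright x=x$ becomes $\lambda_v(x)=v$ with $v=\rho_x^{-1}(x)$, which says exactly that $(\rho_x^{-1}(x),x)\in F$, i.e. that every $x$ occurs as a second coordinate of an element of $F$. So it remains to prove that $\pi_2$ is surjective. Here I would invoke finiteness (a solution is finite by convention): since $\pi_1$ is a bijection we have $|F|=|X|<\infty$, and $\pi_2$ is then an injection between finite sets of equal cardinality, hence a bijection; in particular $\pi_2$ is surjective, which is precisely the right quandle condition.

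The only genuinely delicate points are bookkeeping rather than conceptual: one must read off that the left and right quandle conditions are the surjectivity of the two projections of the \emph{same} set $F$ (equivalently, that the two diagonal maps $x\mapsto\lambda_x^{-1}(x)$ and $x\mapsto\rho_x^{-1}(x)$ parametrise $F$ from the two sides), and the passage from injectivity to surjectivity of $\pi_2$ rests essentially on finiteness. An alternative, coordinate-free route is to observe that the flip-conjugate $\bar r=\tau r\tau$ is again a solution whose left derived rack is the right derived rack of $r$, reducing the statement to the invariance of the biquandle property under $r\mapsto\bar r$; but the fixed-point count above is shorter and makes the role of finiteness transparent.
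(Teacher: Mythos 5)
Your argument is correct and is essentially the paper's own proof: both identify the left (resp.\ right) quandle condition with every element of $X$ occurring as the first (resp.\ second) coordinate of a fixed point of $r$, observe that each coordinate of a fixed point determines the other, and conclude by finiteness. Your write-up merely makes explicit the set $F$, the two projections, and the pigeonhole step that the paper leaves implicit.
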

\begin{proof}
    Let $x \in X$ then, as $(X,r)$ is left non-degenerate, there exist unique $t, w \in X$ such that $r(x,t)=(x,w)$. However, as the derived rack of $(X,r)$ is a quandle, it follows that there exists a $z \in X$ such that $r^2(x,t)=r(x,w)=(x,z)$. Hence, it follows that $t=w=z$. Moreover, as $x = \rho_t^{-1}(t)$, it follows that the $x$ for which $r(x,t)=(x,t)$ is unique. This shows that for every $y \in X$, there exists an $x$ such that $r(x,y)=(x,y)$, which shows that the right derived rack is a quandle, showing the result.
\end{proof}

Now, we recall the Yang--Baxter monoid of a set-theoretic solution and its algebraic structure.

Let $(X,r)$ be a solution. Denote for all $x,y \in X$, $r(x,y) =(\lambda_x(y),\rho_y(x))$. Then, its associated Yang--Baxter monoid is the monoid $$ M(X,r) = \left< x \mid x\circ y = u\circ v \textnormal{, if } r(x,y)=(u,v)\right>.$$
As the defining relations of $M(X,r)$ are homogeneous in the length of the words, the canonical map from $X$ to the generators of $M(X,r)$ is injective, by abuse of notation we will also denote these generators as $X$. Moreover, the Yang--Baxter monoid $M(X,r)$ carries an additional additive structure, as noted in \cite{MR3974961}, similar to the skew brace structure of $G(X,r)$. Denote $A(X,r)$ the monoid with presentation $$ A(X,r) = \left< x \in X \mid x\oplus y = y\oplus\sigma_y(x) \right>,$$ where $\sigma_y(x) =x\triangleleft y= \lambda_{y}(\rho_{\lambda_{x}^{-1}(y)}(x))$. Hence, it corresponds to the Yang--Baxter monoid of the derived solution of $(X,r)$. 

It was shown by Gateva-Ivanova and Majid \cite{MR2383056} that the solution $(X,r)$ extends to a solution $(M(X,r),r_M)$ such that $r_M|_{X \times X} = r$. We denote for $a,b \in M(X,r)$, the map $r_M(a,b)= (\lambda_a(b),\rho_b(a))$. In \cite{MR4278757} it was shown that the map $\lambda: M(X,r) \rightarrow \textup{Aut}(A(X,r))$ is an action and the natural map $\id_X: X \rightarrow X$ lifts to a bijective $1$-cocycle $\pi$ from $M(X,r)$ to $A(X,r)$ for the $\lambda$-action \cite{MR3974961}. This allows to transport the semigroup structure of $A(X,r)$ to $M(X,r)$. For any $a,b \in M(X,r)$, we denote $a+b = \pi^{-1}(\pi(a)\oplus \pi(b))$ and denote $\sigma_a(b)= \pi^{-1}\sigma_{\pi(a)}(\pi(b))$. To avoid confusion, we denote the set $M(X,r)$ by $M$. Hence, the monoid $M(X,r)=(M,\circ)$ and $A(X,r) \cong (M,+)$. Note that one has, for $a,b \in M$ by the $1$-cocycle condition that $a\circ b = a + \lambda_a(b)$ and $a+b = b+\sigma_b(a)$.

Let $(X,r)$ be a solution, let $\sigma_y(x)=x\triangleleft y=\lambda_y(\rho_{\lambda^{-1}_x(y)}(x))$, with analogous presentations one can define the Yang--Baxter group $G(X,r)$ and the derived Yang--Baxter group $A_g(X,r)$.
The canonical map $i:X\to G(X,r)$ (sending generators to generators) need not be injective. Solutions for which $i$ is injective are called \emph{injective}. Moreover, $(X,r)$ induces a solution on the image $i(X)\subseteq G(X,r)$; this is an injective solution, called the \emph{injectivization} of $(X,r)$ and denoted by $Inj(X,r)$. The potential loss of information in passing to $G(X,r)$, when $i$ is not injective, is our main reason for working with the Yang--Baxter monoid instead.

However, it is known that the Yang--Baxter group can be endowed with a skew brace structure see \cite{MR3647970}.
One can consider a finite quotient of the Yang--Baxter group, the permutation group $\mathcal{G}(X,r)=\langle(\lambda_x,\rho_x^{-1})\mid x\in X\rangle$, which itself carries a skew brace structure, see for instance \cite[Theorem 3.11]{MR3835326}, and plays an important role in the study of the solution $(X,r)$.

Consider the group $\langle\sigma_x^{-1}\mid x\in X\rangle = \{\sigma_a^ {-1}\mid a \in A_g\}$ equipped with the structure of a trivial skew brace, i.e. $\sigma_a^{-1}+\sigma_b^{-1}=\sigma_a^{-1}\circ\sigma_b^{-1} = \sigma_{a+b}^{-1}$. Consider now the semidirect product of skew braces $$\perm=\perm(X,r)=\langle \sigma_x^{-1} \mid x \in X\rangle \rtimes \mathcal{G}(X,r)=\{(\sigma_a^{-1},\lambda_b,\rho_b^{-1})\mid a,b\in A_g\}$$ where $\mathcal{G}$ acts on $\langle\sigma_x^{-1}\mid x\in X\rangle$ via
\begin{align*}
    (\lambda_a,\rho^{-1}_a)\cdot \sigma_b^{-1}=\sigma_{\lambda_a(b)}^{-1}.
\end{align*}
Hence, the multiplicative group of $\perm$ is given by
\begin{align*}
    (\sigma_a^{-1},\lambda_b,\rho_b^{-1})\circ(\sigma_{a'}^{-1},\lambda_{b'},\rho_{b'}^{-1}) 
    = (\sigma_{a+\lambda_b(a')}^{-1},\lambda_{b\circ b'},\rho_{b\circ b'}^{-1}). 
\end{align*}

Then $\mathcal{S}(X,r)$ has a natural action on $X$ given by 
$$ (\sigma_a^{-1},\lambda_b,\rho_b^{-1})\cdot x = \sigma_a^{-1}\lambda_b(x),$$
with $x \in X$ and $(\sigma_a^{-1},\lambda_b,\rho_b^{-1}) \in \mathcal{S}(X,r)$

Finally, to relate a solution to its injectivization, it is crucial to understand the \emph{cancellative congruence} of $M(X,r)$. Indeed, two elements of $X$ are identified in $G(X,r)$ precisely when they are equivalent as a pair under this congruence on $M(X,r)$. To recall the cancellative congruence (determined in \cite{colazzo2023structure}), we first need the Dehornoy class. It was shown in \cite{jespers2019structure,MR3974961} that for any solution $(X,r)$ there exists a positive integer $d$ such that $\sigma_{dx}=\id_M=\lambda_{dx}$ for all $x\in X$, where $dx$ denotes the $d$-multiple in $(M,+)$. 
The least such $d$ is called the \emph{Dehornoy class} of $(X,r)$. Knowing $d$ allows one to describe explicitly the cancellative congruence of $M(X,r)$.

\begin{prop}\label{cancellativecongruence:pro}
    Denote $X=\left\lbrace x_1,...,x_n\right\rbrace$. Then, the cancellative congruence for $M(X,r)$ is given by 
 $$ \eta_M = \left\lbrace (a,b) \mid a+lz=b+lz\right\rbrace,$$ where $l$ is a particular positive integer depending on the solution, $z=dx_1+...+dx_n$ and $d$ is the Dehornoy class of $(X,r)$. 
\end{prop}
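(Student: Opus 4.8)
The plan is to pin down the central role of $z$, to recognise the claimed relation as the kernel of a central localisation identifying $M(X,r)$ with $G(X,r)$, and finally to upgrade the resulting ``for some exponent'' description to the single uniform exponent $l$.

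\textbf{Step 1: $z$ is central, and additive and multiplicative translation by $z$ coincide.} By definition of the Dehornoy class, $\lambda_{dx_i}=\sigma_{dx_i}=\id_M$ for every $i$. From $a+b=b+\sigma_b(a)$ one gets $a+dx_i=dx_i+a$, so each $dx_i$, and hence $z=dx_1+\cdots+dx_n$, is central in $(M,+)$. The $\lambda$-action factors through $\mathcal{G}(X,r)$ and therefore permutes the generating set $X$; since $\lambda_a$ is additive, $\lambda_a(z)=\sum_i d\,\lambda_a(x_i)=z$ for every $a$. Moreover $\{w\in M:\lambda_w=\id_M\}$ is closed under $+$: if $\lambda_u=\lambda_v=\id_M$ then $u+v=u\circ\lambda_u^{-1}(v)=u\circ v$, whence $\lambda_{u+v}=\lambda_u\lambda_v=\id_M$; as $\lambda_{dx_i}=\id_M$, induction gives $\lambda_z=\id_M$ (and symmetrically $\sigma_z=\id_M$). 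Combining, $z\circ a=z+\lambda_z(a)=z+a$ and $a\circ z=a+\lambda_a(z)=a+z=z+a$, so $z$ is central in $(M,\circ)$ and $a\circ z^{\circ k}=a+kz$ for all $k\ge 0$.

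\textbf{Step 2: the ``$\exists k$'' relation is $\eta_M$.} Put $\theta=\{(a,b):\exists\, k\ge 0,\ a+kz=b+kz\}$. Since $a+kz=a\circ z^{\circ k}$ we have $\lambda_{a+kz}=\lambda_a\lambda_z^{\circ k}=\lambda_a$, and together with the centrality of $z$ this makes $\theta$ compatible with $\circ$ on both sides, so $\theta$ is a congruence on $M(X,r)$. Because $z$ is central, the localisation $M[z^{-1}]$ exists and $\theta$ is precisely the kernel congruence of $M\to M[z^{-1}]$. Each generator $x_i$ is a left $\circ$-divisor of $z$: reordering the additive factors (the $dx_j$ are additively central) gives $z=x_i+u_i=x_i\circ\lambda_{x_i}^{-1}(u_i)$, and the mirror argument using $\sigma$ and $\rho$ exhibits $x_i$ as a right divisor as well. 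Hence every $x_i$ becomes invertible in $M[z^{-1}]$, so $M[z^{-1}]$ is a group, and the universal property of the structure group identifies $M[z^{-1}]\cong G(X,r)$. Consequently $M/\theta$ embeds into the group $M[z^{-1}]$ and is cancellative, forcing $\eta_M\subseteq\theta$ (as $\eta_M$ is the least congruence with cancellative quotient); conversely, cancellativity of $M/\eta_M$ allows one to cancel $z^{\circ k}$ from $a\circ z^{\circ k}=b\circ z^{\circ k}$, giving $\theta\subseteq\eta_M$. Thus $\eta_M=\theta$.

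\textbf{Step 3: the uniform exponent.} It remains to replace ``$\exists k$'' by a single $l$. Write $E_l=\{(a,b):a+lz=b+lz\}$; adding $z$ shows $E_l\subseteq E_{l+1}$ and $\bigcup_l E_l=\theta=\eta_M$. If $E_l=E_{l+1}$, then for $(a,b)\in E_{l+2}$ we have $(a+z,b+z)\in E_{l+1}=E_l$, i.e. $(a,b)\in E_{l+1}$; hence the chain is constant from the first index at which two consecutive terms agree, so it suffices to produce one such stabilisation level. As the defining relations of $M(X,r)$ are length-homogeneous, $M$ is graded and $\eta_M$ relates only elements of equal length, so on each finite graded component the chain $E_l$ stabilises. \emph{The main obstacle is to bound the stabilisation level independently of the length.} This uniform bound is where the structural finiteness of $M(X,r)$ enters: the structure algebra is module-finite over a central affine subalgebra, so the ascending chain condition provided by that module-finiteness confines all non-cancellativity to a bounded range and yields the single integer $l$ depending only on the solution.
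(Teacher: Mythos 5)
A preliminary remark: the paper itself gives no proof of this proposition — it is recalled from \cite{colazzo2023structure} — so there is no in-paper argument to compare yours against; I assess the proposal on its own terms.

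Your Steps 1 and 2 are correct and essentially complete: the centrality of $z$ in $(M,+)$ and $(M,\circ)$, the identity $a\circ z^{\circ k}=a+kz$, the identification of $\theta=\{(a,b):\exists k,\ a+kz=b+kz\}$ with the kernel congruence of the central localisation $M\to M[z^{-1}]\cong G(X,r)$, and the two inclusions giving $\eta_M=\theta$ are all sound. (A minor simplification: the ``mirror argument'' for right divisibility is unnecessary — a monoid in which every element has a right inverse is already a group, so left divisibility of $z$ by each $x_i$ suffices to make $M[z^{-1}]$ a group.)

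The genuine gap is in Step 3, and you flag it yourself. Having reduced the statement to the stabilisation of the ascending chain $E_1\subseteq E_2\subseteq\cdots$, and having correctly shown that a single equality $E_l=E_{l+1}$ propagates to all later indices, you close with an appeal to ``module-finiteness of the structure algebra'' and ``the ascending chain condition'' without specifying which chain of algebraic objects is meant, nor why its stabilisation forces $E_l=E_{l+1}$ as sets of pairs of monoid elements. As written, the uniform exponent $l$ — which is the actual content of the proposition beyond the ``$\exists k$'' description you do establish — is not obtained. The argument can be completed along the lines you gesture at: in the monoid algebra $K[M(X,r)]$ put $V_l=\ker(\text{right multiplication by }z^{\circ l})$; then $V_l$ is the $K$-span of $\{a-b : (a,b)\in E_l\}$, it is a two-sided ideal, and $V_l\subseteq V_{l+1}$. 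Since $K[M(X,r)]$ is a finitely generated module over a central affine (hence Noetherian) commutative subalgebra, it is Noetherian and the chain $(V_l)_l$ stabilises. Crucially, $V_l=V_{l+1}$ does imply $E_l=E_{l+1}$: for $a,b\in M$ one has $a-b\in\ker(\cdot\, z^{\circ l})$ if and only if $a\circ z^{\circ l}=b\circ z^{\circ l}$, because distinct monoid elements are linearly independent in $K[M(X,r)]$. These are precisely the sentences missing from your Step 3; without them the proof is incomplete at its decisive point.
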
  

\section{Decomposability and injectiveness}\label{sec:dec}

In this section we show that a solution is decomposable if and only if its injectivization is decomposable.
One can also extend the notion to $n$-decomposability (Definition~\ref{defn:ndecomposable}) and prove that the maximal number of factors in a decomposition is preserved under injectivization.

\begin{defn}\label{defn:ndecomposable}
   Let $(X,r)$ be a solution and $n$ be a positive integer. Then, $(X,r)$ is said to be \emph{$n$-decomposable} if there exist $n$ subsolutions $(X_1,r_1),\ldots,(X_n,r_n)$ of $(X,r)$ such that for any $i,j \in \left\lbrace 1,\ldots,n\right\rbrace$ one has that $r(X_i,X_j)=(X_j,X_i)$ and the sets $\left\lbrace X_1,\ldots,X_n\right\rbrace$ form a partition of $X$.
\end{defn}

In contrast to $2$-decomposability, for $n$-decomposability ($n>2$) the interaction of the subsolutions under $r$ is not implicit in a partition of $(X,r)$ into subsolutions. For instance, if $(X,r)$ is square-free with $|X|=n>2$ (e.g. the associated solution of a quandle of size $n$), then
$\{\{x\}\mid x\in X\}$ is a partition into subsolutions. This yields an $|X|$-decomposition if and only if $r$ is the twist map $\tau(x,y)=(y,x)$. 

Clearly, for any positive integer $n\geq 2$, an $n+1$-decomposable solution is $n$-decomposable.
Moreover, the concept of $n$-decomposability can be translated into a property of homomorphic images of solutions, i.e. the following characterization holds.

\begin{prop}
   Let $(X,r)$ be a solution and $n$ be a positive integer. Then, $X$ is  $n$-decomposable if and only if there exists an epimorphism of solutions $f\colon (X,r) \rightarrow ( \left\lbrace 0,1, \ldots, n-1\right\rbrace, \tau)$, where $\tau$ denotes the twist map $\tau(x,y)=(y,x)$.
\end{prop}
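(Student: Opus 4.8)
The plan is to realise an explicit bijective correspondence between $n$-decompositions of $(X,r)$ and epimorphisms onto $(\{0,1,\ldots,n-1\},\tau)$, passing between a partition of $X$ and the fibres of a map. Recall that a morphism of solutions $f\colon (X,r)\to (Y,s)$ is a map satisfying $(f\times f)\,r = s\,(f\times f)$, and that for the twist solution $s=\tau$ this reads $(f\times f)(r(x,y)) = (f(y),f(x))$.

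For the forward implication, I would start from an $n$-decomposition $\{X_1,\ldots,X_n\}$, relabel the blocks to run over $\{0,\ldots,n-1\}$, and define $f\colon X\to\{0,\ldots,n-1\}$ by sending every element of $X_i$ to $i$. Surjectivity is immediate, since each block of a partition is non-empty. To see that $f$ is a morphism of solutions, I would take $(x,y)\in X_i\times X_j$ and invoke the defining property $r(X_i\times X_j)=X_j\times X_i$: writing $r(x,y)=(u,v)$ yields $u\in X_j$ and $v\in X_i$, so $(f\times f)(r(x,y))=(j,i)=\tau(f(x),f(y))$, which is exactly the morphism condition.

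For the converse, given an epimorphism $f$, I would set $X_i=f^{-1}(i)$ for $i\in\{0,\ldots,n-1\}$; surjectivity of $f$ guarantees these blocks are non-empty, and they clearly partition $X$. The morphism condition now says: if $r(x,y)=(u,v)$ with $f(x)=i$ and $f(y)=j$, then $(f(u),f(v))=\tau(i,j)=(j,i)$, so $r(X_i\times X_j)\subseteq X_j\times X_i$. Taking $i=j$ shows each $X_i$ is $r$-invariant; since $r$ is a bijection mapping the finite set $X_i\times X_i$ into itself, it maps it onto itself, and non-degeneracy is inherited because each $\lambda_x,\rho_x$ with $x\in X_i$ restricts to a bijection of the finite set $X_i$. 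Hence each $(X_i, r|_{X_i\times X_i})$ is genuinely a subsolution.

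The one point needing slightly more than a direct reading of the morphism condition is upgrading the inclusion $r(X_i\times X_j)\subseteq X_j\times X_i$ to the equality demanded by Definition~\ref{defn:ndecomposable}; this is where I expect the only real work. Here I would use the bijectivity of $r$: given $(u,v)\in X_j\times X_i$, surjectivity of $r$ provides $(x,y)$ with $r(x,y)=(u,v)$, and applying the morphism condition to this pair forces $f(x)=i$ and $f(y)=j$, so $(x,y)\in X_i\times X_j$. This gives $X_j\times X_i\subseteq r(X_i\times X_j)$, hence the reverse inclusion and the desired equality. (Alternatively, on these finite sets one may simply compare cardinalities, since $|X_i\times X_j|=|X_j\times X_i|$ and $r$ is injective.) This equality, together with the partition and subsolution checks above, is precisely the data of an $n$-decomposition, completing both directions.
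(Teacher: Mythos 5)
Your proposal is correct and follows essentially the same route as the paper's (much terser) proof: the forward direction sends each block to its index, and the converse takes the fibres of $f$ as the blocks. The extra details you supply — upgrading the inclusion $r(X_i\times X_j)\subseteq X_j\times X_i$ to an equality via bijectivity of $r$, and checking that each fibre is a genuine non-degenerate subsolution — are exactly the steps the paper leaves implicit, and they are carried out correctly.
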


\begin{proof}
    Let $(X,r)$ be $n$-decomposable into the subsolutions $(X_0,r_0),\ldots,(X_{n-1},r_{n-1})$. Denote $f:X \rightarrow \left\lbrace 0,1,\ldots,n-1\right\rbrace$ with $f(X_i)=\left\lbrace i\right\rbrace$, which is a morphism of $(X,r)$ to $(\left\lbrace 0,1,\ldots,n-1\right\rbrace, \tau)$.

    Let $f:(X,r)\rightarrow (\left\lbrace 0,1,\ldots,n-1\right\rbrace, \tau)$ be an epimorphism of solutions.  Then $X=\bigcup_{i=0}^{n-1}f^{-1}(i)$ is a partition of $X$, which respects the solution $r$. Hence, the result follows.
\end{proof}

Clearly, a solution is decomposable if and only if it is $2$-decomposable and the following corollary holds.

\begin{cor}\label{decomphomomorph:pro}
    Let $(X,r)$ be a solution. A solution $(X,r)$ is decomposable if and only if there exists an epimorphism of solutions $f:(X,r)\longrightarrow (\left\lbrace 0,1\right\rbrace,\tau)$, where $\tau:\left\lbrace 0,1\right\rbrace \times \left\lbrace 0,1\right\rbrace \to \left\lbrace 0,1\right\rbrace \times \left\lbrace 0,1\right\rbrace$, $\tau(x,y)=(y,x)$ denotes the flip map.
\end{cor}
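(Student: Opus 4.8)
The plan is to deduce the corollary directly from the preceding Proposition specialized to $n=2$. Since $(\{0,1\},\tau)$ with the flip map is exactly the target solution $(\{0,1,\dots,n-1\},\tau)$ in the case $n=2$, the Proposition already yields that $(X,r)$ is $2$-decomposable in the sense of Definition~\ref{defn:ndecomposable} if and only if there is an epimorphism $f\colon(X,r)\to(\{0,1\},\tau)$. Thus it remains only to check that ``decomposable'' in the sense of the introduction coincides with ``$2$-decomposable'', which is the sole point requiring an argument.

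First I would dispose of the easy implication. If $\{X_1,X_2\}$ witnesses $2$-decomposability, then setting $i=j$ in the defining condition $r(X_i,X_j)=(X_j,X_i)$ yields $r(X_i\times X_i)\subseteq X_i\times X_i$, so $Y=X_1$ and $Z=X_2$ exhibit $(X,r)$ as decomposable.

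The substantive direction is the converse. Starting from a partition $X=Y\sqcup Z$ with $r(Y\times Y)\subseteq Y\times Y$ and $r(Z\times Z)\subseteq Z\times Z$, I would show that the off-diagonal blocks are forced to swap, i.e. $r(Y\times Z)=Z\times Y$ and $r(Z\times Y)=Y\times Z$, so that $\{Y,Z\}$ is in fact a $2$-decomposition. Writing $r(a,b)=(\lambda_a(b),\rho_b(a))$, the diagonal hypotheses say $\lambda_x(Y)\subseteq Y$ and $\rho_y(Y)\subseteq Y$ for $x,y\in Y$, and likewise on $Z$. This is where non-degeneracy and finiteness enter: each $\lambda_x$ and each $\rho_y$ is a bijection of the finite set $X$, so an inclusion such as $\lambda_x(Y)\subseteq Y$ is forced to be an equality $\lambda_x(Y)=Y$, whence $\lambda_x(Z)=Z$ as well. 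Applying this to both blocks, for $x\in Y$ and $z\in Z$ one gets $\lambda_x(z)\in Z$ and $\rho_z(x)\in Y$, so $r(Y\times Z)\subseteq Z\times Y$; symmetrically $r(Z\times Y)\subseteq Y\times Z$. Finally, since $r$ is a bijection of $X\times X$ carrying $Y\times Y$ onto $Y\times Y$ and $Z\times Z$ onto $Z\times Z$, a cardinality count upgrades the two inclusions to the desired equalities.

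The one genuinely nontrivial point, and the place I expect the main obstacle, is precisely this converse: without the bijectivity of the $\lambda_x$ and $\rho_y$ together with the finiteness of $X$, one cannot promote $\lambda_x(Y)\subseteq Y$ to $\lambda_x(Z)=Z$, and the behaviour of $r$ on the off-diagonal blocks would not be determined by its behaviour on the diagonal. Once the two notions of decomposability are identified, the corollary is immediate from the Proposition.
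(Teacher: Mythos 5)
Your proposal is correct and follows the same route as the paper: the paper simply states that ``clearly, a solution is decomposable if and only if it is $2$-decomposable'' and then invokes the preceding proposition with $n=2$. The only difference is that you actually supply the argument for the step the paper labels as clear (using non-degeneracy and finiteness to upgrade $\lambda_x(Y)\subseteq Y$ to $\lambda_x(Y)=Y$ and force the off-diagonal blocks to swap), and that argument is sound.
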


If $(X,r)$ is a solution, then a characterization of indececomposability in terms of $\mathcal{S}(X,r)$ from  \cite[Proposition 6.6]{MR3957824}. Namely, $(X,r)$ is indecomposable if and only if the group $\mathcal{S}(X,r)$ acts transitively on $X$. This can be further extended as follows.

\begin{prop}
    Let $(X,r)$ be a solution. The maximal positive integer $n$, such that $(X,r)$ is $n$-decomposable, is the number of orbits of $X$ under the action of $\mathcal{S}(X,r)$.
\end{prop}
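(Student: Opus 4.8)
The plan is to connect the combinatorial notion of $n$-decomposability to the orbit structure of the group $\mathcal{S}(X,r)$ acting on $X$, leveraging the already-recalled fact that indecomposability (i.e. $2$-decomposability failing) is equivalent to transitivity of this action. First I would establish that any orbit decomposition yields an $n$-decomposition, and conversely that any $n$-decomposition refines into the orbit decomposition, so that the orbits realise the maximal value of $n$.

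\medskip

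For the first direction, let $X = O_1 \sqcup \cdots \sqcup O_m$ be the decomposition of $X$ into the $m$ orbits under the $\mathcal{S}(X,r)$-action. I would show each orbit $O_i$ is a subsolution and that $r(O_i \times O_j) = O_j \times O_i$. The key observation is that the maps $\lambda_a$ (for $a \in A_g$) and $\sigma_a^{-1}$ generate $\mathcal{S}(X,r)$, and the action on $X$ is $(\sigma_a^{-1},\lambda_b,\rho_b^{-1})\cdot x = \sigma_a^{-1}\lambda_b(x)$. Since $r(x,y) = (\lambda_x(y), \rho_y(x))$ and $\rho_y(x)$ can be expressed via $\lambda$ and $\sigma$ maps lying in $\mathcal{S}(X,r)$, both coordinates of $r(x,y)$ stay in the same orbits as $y$ and $x$ respectively. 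Concretely, $\lambda_x(y)$ lies in the orbit of $y$ because $\lambda_x \in \mathcal{S}(X,r)$, and similarly $\rho_y(x)$ lies in the orbit of $x$; this gives $r(O_i\times O_j)\subseteq O_j\times O_i$, and bijectivity of $r$ together with the analogous inclusion for $r^{-1}$ upgrades this to equality. Thus the orbit partition is an $m$-decomposition, so the maximal $n$ is at least $m$.

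\medskip

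For the converse, suppose $(X,r)$ is $n$-decomposable with subsolutions $X_1,\ldots,X_n$, so $r(X_i\times X_j)=(X_j\times X_i)$ for all $i,j$. I would argue that the $\mathcal{S}(X,r)$-action cannot mix distinct $X_i$: since $\lambda_x(X_j)\subseteq X_j$ whenever $x\in X_i$ (reading off the first coordinate of $r(X_i\times X_j)=(X_j\times X_i)$) and likewise for the $\sigma$-maps, each generator of $\mathcal{S}(X,r)$ preserves every $X_j$ setwise. Hence each $X_j$ is a union of orbits, which forces $n \le m$ (the number of orbits), since a partition into $n$ $\mathcal{S}$-stable pieces can have at most as many pieces as there are orbits. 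Combining the two inequalities gives that the maximal $n$ equals $m$, the number of orbits.

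\medskip

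The main obstacle I anticipate is the bookkeeping in the first direction: verifying cleanly that both components of $r$ and $r^{-1}$ respect the orbit labels requires expressing $\rho_y(x)$ and the inverse maps purely in terms of elements of $\mathcal{S}(X,r)$. The definition $\sigma_y(x)=\lambda_y\rho_{\lambda_x^{-1}(y)}(x)$ lets one solve for $\rho$ in terms of $\lambda$ and $\sigma$, both of which act within $\mathcal{S}(X,r)$, so this is a finite but slightly delicate manipulation rather than a conceptual difficulty. Once the orbits are shown to be setwise preserved by every generator — equivalently, that the orbits are exactly the minimal $\mathcal{S}(X,r)$-invariant blocks — the maximality of $n = m$ is immediate, recovering the transitive case (Proposition from \cite{MR3957824}) as the instance $m=1$.
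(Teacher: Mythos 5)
Your proposal is correct and follows essentially the same route as the paper: show the $\mathcal{S}(X,r)$-orbits form a decomposition by expressing $\rho$ in terms of the $\lambda$- and $\sigma$-maps (which act via $\mathcal{S}(X,r)$), and conversely show that any $n$-decomposition is preserved by $\lambda_x$, $\rho_x$ and hence $\sigma_x$, so each part is a union of orbits. The key identities you invoke, $\rho_z(x)=\lambda_{\lambda_x(z)}^{-1}\sigma_{\lambda_x(z)}(x)$ and $\sigma_x(y)=\lambda_x(\rho_{\lambda_y^{-1}(x)}(y))$, are exactly the ones used in the paper's proof.
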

\begin{proof}
    First we show that the orbits in $X$ under $\mathcal{S}$ form a decomposition of $(X,r)$. Let $Y$ denote an orbit in $X$ under $\mathcal{S}$ and $x \in X$, then it is immediate that $\lambda_x(Y)=Y$. Moreover, let $y \in Y$, then as $Y$ is an orbit under $\mathcal{S}$ one has that $$ \rho_x(y)= \lambda^{-1}_{\lambda_y(x)}\left(\sigma_{\lambda_y(x)}(y)\right)\in Y.$$
    Hence, it is shown that $r(X\times Y) = Y\times X$ and $r(Y\times X) = X \times Y$, which shows that the orbits form a decomposition of $(X,r)$.

    Finally, let $Y_1,\ldots,Y_k$ be a $k$-decomposition of $(X,r)$. As $r(X\times Y_1)=Y_1\times X$ and $r(Y_1\times X)=X \times Y_1$, it follows for any $x \in X$ that $\lambda_x(Y_1)=Y_1$ and $\rho_x(Y_1)=Y_1$. Let $y \in Y_1$. It then follows that $$ \sigma_x(y) = \lambda_{x}(\rho_{\lambda_{y}^{-1}(x)}(y)) \in Y_1.$$ Hence, $Y_1$ is a union of orbits in $X$ under $\mathcal{S}$, which shows that $k$ is bounded by the number of orbits in $X$ under $\mathcal{S}$.
\end{proof}

The following proposition is clear, as the orbits of the action of $\perm$ on $X$ are included in the orbits of the action of $\left< \sigma_x \mid x \in X\right>$, whose action coincides with that of $\mathcal{S}(X,r_{\triangleleft})$. 

\begin{prop}
    Let $(X,r)$ be a solution. If $(X,r)$ is $n$-decomposable, then the derived solution $(X,s)$ is $n$-decomposable. 
\end{prop}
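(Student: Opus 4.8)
The plan is to reduce the statement to the orbit-counting characterisation proved just above: for any solution, the maximal $m$ for which it is $m$-decomposable equals the number of orbits of $X$ under the action of $\mathcal{S}$, and $m$-decomposability for a fixed $m$ amounts to $m$ not exceeding this orbit count. Hence it suffices to prove that the derived solution $(X,s)$ has at least as many $\mathcal{S}$-orbits on $X$ as $(X,r)$ does.

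First I would identify the permutation group acting for the derived solution. Writing $s(x,y)=(y,x\triangleleft y)=(y,\sigma_y(x))$, all left maps $\lambda^s_x$ equal $\id_X$ and the right maps are $\rho^s_y=\sigma_y$. Feeding this into the defining formula $\sigma^s_y(x)=\lambda^s_y\rho^s_{(\lambda^s_x)^{-1}(y)}(x)$ collapses it to $\sigma^s_y=\sigma_y$, so the derived and original solutions share the same $\sigma$-maps. Since the image of $\mathcal{S}(X,s)$ in $\Sym(X)$ is generated by the $(\sigma^s_x)^{-1}$ together with the $\lambda^s_x=\id_X$, this image is exactly $\langle \sigma_x \mid x\in X\rangle$.

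I would then compare the two acting groups inside $\Sym(X)$. By construction the image of $\mathcal{S}(X,r)$ is generated by the $\sigma_x^{-1}$ and the $\lambda_x$, so it contains $\langle \sigma_x \mid x\in X\rangle$; that is, the group acting for $(X,s)$ is a subgroup of the group acting for $(X,r)$. A subgroup has finer orbits, so every orbit of $\langle \sigma_x\rangle$ is contained in an orbit of $\mathcal{S}(X,r)$. Consequently the orbit partition attached to $(X,s)$ refines the one attached to $(X,r)$, and the number of orbits can only have increased.

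Combining these observations finishes the argument: if $(X,r)$ is $n$-decomposable then $n$ is at most the number of $\mathcal{S}(X,r)$-orbits, which is at most the number of $\mathcal{S}(X,s)$-orbits, so applying the orbit-counting proposition to the solution $(X,s)$ shows that $(X,s)$ is $n$-decomposable. The only delicate point—and the step I would flag as the crux—is the direction of the orbit refinement: one must keep in mind that passing to the derived solution \emph{drops} the maps $\lambda_x$ from the acting group, shrinking it, which can only split orbits apart and hence raise the decomposability bound rather than lower it. Everything else is a direct appeal to results already established.
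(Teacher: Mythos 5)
Your proposal is correct and is essentially the paper's own argument: the paper justifies this proposition in one sentence by observing that the action of $\mathcal{S}(X,s)$ coincides with that of $\langle\sigma_x\mid x\in X\rangle$, a subgroup of the group through which $\mathcal{S}(X,r)$ acts, so its orbit partition refines that of $\mathcal{S}(X,r)$ and the orbit-counting proposition applies. You simply spell out the verification that $\sigma^s_y=\sigma_y$ and the direction of the refinement, both of which are correct.
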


As noted above, the canonical map $i:X\rightarrow G(X,r)$ is not necessarily an embedding,  as the following example shows.

\begin{ex}
    Let $X=\left\lbrace 1,\ldots,n\right\rbrace$ and $\sigma=(1...n) \in \Sym(n)$ be the proposed $n$-cycle. Denote the map $r(x_k,x_j)=(x_j,x_{\sigma(k)})$. Then $(X,r)$ is a solution, which is not injective. Indeed, $G(X,r) \cong \mathbb{Z}$ and $|i(X)|=1$. Indeed, in $G(X,r)$ one sees that $x_kx_k=x_kx_{k+1}$, where the indices should be understood modulo $n$, which shows that in $G(X,r)$ one has the equality $i(x_1)=i(x_2)=\cdots=i(x_n)$.
\end{ex}

This led to the introduction of the injectivization of a solution in \cite{MR1809284}.

\begin{defn}
    Let $(X,r)$ be a solution. Denote $i:X \rightarrow G(X,r)$. Then, the injectivization $\Inj(X,r)$ of $(X,r)$ is the subsolution $(i(X),r_{G}|_{i(X)\times i(X)})$ of $(G(X,r),r_G)$.
\end{defn}

Note that, as $G(X,r) \cong G(\Inj(X,r))$, the solution $\Inj(X,r)$ is injective. Implicitely in  \cite[Remark 5.6]{MR4278757}, it was noticed that the injectivization of a solution $(X,r)$ is an intermediate factor between $(X,r)$ and $\Ret(X,r)$, which entails, in particular, that irretractable solutions are injective. Moreover, as the cancellative congruence of $M(X,r)$ is explicitly described in \cite{colazzo2023structure}, one can determine precisely when two elements $x,y \in X$ get identified under the canonical map $i$.
 
The following theorem shows that $n$-decomposability is invariant under injectivization.
 
\begin{thm}\label{injdecomp:thm}
Let $(X,r)$ be a solution. Then $(X,r)$ is $n$-decomposable if and only if $\textup{Inj}(X,r)$ is $n$-decomposable. 

In particular, $(X,r)$ is decomposable if and only if $\Inj(X,r)$ is decomposable.
\end{thm}
\begin{proof}
The sufficiency is clear. Suppose that $(X,r)$ is $n$-decomposable into the $n$ subsolutions $X_k$. Let $i:X\longrightarrow G(X,r)$ be the canonical map. Then, if $i(X_k)\cap i(X_j)=\emptyset$ for $i \neq j$, then $i(X_1),\ldots,i(X_n)$ is an $n$-decomposition of $\Inj(X,r)$. Thus, it rests to show that $i(X_k)\cap i(X_j) =\emptyset $. Assume the contrary, there is a $y \in X_k$ and $x \in X_j$ such that $i(y)=i(x)$. Then, $(y,x)\in \eta_{M}$. Thus, $$y+ lz = x +lz.$$ Note that $X_1,\ldots,X_n$ also induce an $n$-decomposition of the derived solution, which entails that the number of elements in $X_k$ and $X_j$ in a word in $A(X,r)$ is invariant under applying its defining relations. However, in $y+lz$ there are precisely $ld|X_k|+1$ elements of $X_k$ and $ld|X_j|$ of $X_j$, which differs from the $ld|X_k|$ elements of $X_k$ and $ld|X_j|+1$ elements of $X_j$ in $x+lz$, producing a contradiction. Hence, $i(X_j)\cap i(X_k) = \emptyset$, which shows the result.
\end{proof}

In particular, the previous shows that the injectization only identifies elements in the same subsolution. 

Note that this can not be extended to a multi-stage setting. Indeed, if $(X,r)$ has a $k$-decomposition into $X_1,\ldots,X_k$, and $X_1$ has a further decomposition into subsolutions $X_{11}$ and $X_{12}$, the injectivization of $(X,r)$ may identify elements of $X_{11}$ and $X_{12}$ as shown in the following example.
\begin{ex}
    Consider $\left\lbrace 0,1,2 \right\rbrace$ with $\sigma_0=\sigma_1=\id$ and $\sigma_2=(01)$. Then, $r(x,y)=(y,\sigma_y(x))$ is a solution. Moreover, $\left\lbrace 0,1,2\right\rbrace$ is $2$-decomposable into $Y=\left\lbrace 0,1\right\rbrace$ and $\left\lbrace 2 \right\rbrace$. Note that $Y$ is the twist solution and hence decomposable into $\left\lbrace 0\right\rbrace$ and $\left\lbrace 1 \right\rbrace$. However, in $G(X,r)$ one has that $$ i(0) + i(2) = i(2) + i(1)= i(1) + i(2),$$ which shows that in $G(X,r)$ one has that $i(0)=i(1)$.
\end{ex}

The biquandle associated (cf. Definition~\ref{def:biquandleas}) to a solution, as defined by Lebed and Vendramin in \cite[Definition 1.2]{MR3974961}, it is an intermediate solution between a solution and its injectivization. Recall that a biquandle is a solution, where the derived solutions are quandles. 

\begin{defn}\label{def:biquandleas}
    Let $(X,r)$ be a solution. Then, the \emph{biquandle $\textup{BQ}(X,r)$ associated to $(X,r)$} is the induced solution on $X/\sim$, where $\sim $ is the smallest equivalence relation such that $x \sim \lambda_x(y)$, if $y=\rho_y(x)$. 
\end{defn}

\begin{rem}\label{factormorphism:rem}
As noted in \cite{MR3974961} the natural morphism $(X,r) \rightarrow \textup{Inj}(X,r)$ factors into the natural morphisms $(X,r) \rightarrow \textup{BQ}(X,r)\rightarrow \textup{Inj}(X,r)$.
\end{rem}

This will allow us to show that decomposability is invariant under passage to the associated biquandle. This is crucial for the remainder of the paper, since our techniques require working with biquandles; the next result shows that no generality is lost by doing so.

\begin{thm}\label{thm:bddecomp}
    Let $(X,r)$ be a solution. Then, $(X,r)$ is $n$-decomposable if and only if $\textup{BQ}(X,r)$ is $n$-decomposable.

    In particular, $(X,r)$ is decomposable if and only if $\textup{BQ}(X,r)$ is decomposable.
\end{thm}
\begin{proof}
    Note that due to Remark~\ref{factormorphism:rem} the injectivization of $(X,r)$ and $\textup{BQ}(X,r)$ coincide in $\textup{Inj}(X,r)$. Hence, the result follows from Theorem~\ref{injdecomp:thm}.
\end{proof}

\section{Cabling: functoriality and other properties}\label{sec:cabling}

In \cite{cablingorig} Lebed, Ramírez, and Vendramin define cabling operations for involutive non-degenerate solutions of the Yang--Baxter equation, using particular subsolutions of $G(X,r)$. The latter provides a unifying framework to deal with known indecomposability results of involutive solutions. In this section, we extend the cabling technique to bijective non-degenerate solutions.
A key difference is that, in place of subsolutions of $G(X,r)$, we work with subsolutions of the Yang--Baxter monoid $M(X,r)$. Indeed, for an involutive non-degenerate solution $(X,r)$ and positive integer $k$ the canonical map $i_k:X\rightarrow G(X,r)$ with $i(x)=kx$ is an embedding. However, this may fail even for injective non-involutive solutions.

\begin{ex}
Let $X=\left\lbrace x_1,x_2,x_3\right\rbrace$ and $\sigma_1=(23),$ $\sigma_2=(13)$ and $\sigma_3=(12)$. Then, the solution $r(x_j,x_k)=(x_k,x_{\sigma_{k}(j)})$ is injective and indecomposable. However, in $G(X,r)$ one has that $2x_1=2x_2=2x_3$.   
\end{ex}

This shows that, beyond the involutive setting, it is not clear whether one can pull back the subsolution of $(G(X,r),r_G)$ on the subsets $kX$ to a well-defined solution on $X$. We resolve this by working with the canonical embedding into the structure monoid $M(X,r)$.

\begin{prop}\label{prop:cabledexist}
Let $(X,r)$ be a solution. Then the subset $$kX=\left\lbrace (kx,\lambda_{kx}) \mid x \in X \right\rbrace \subseteq M(X,r)$$ is a subsolution $(kX,r_k)$ of $(M,r_M)$. 
\end{prop}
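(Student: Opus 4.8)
The plan is to show that $kX$ is closed under $r_M$; since $kX$ is a finite set and $r_M$ is a bijective non-degenerate map on $M\times M$, the restriction of $r_M$ to the invariant set $kX\times kX$ is automatically a bijective non-degenerate solution, so \emph{closure is the entire content}. Here I read $kx$ as the $k$-fold sum $x+\cdots+x$ in $(M,+)$, recorded together with its automorphism $\lambda_{kx}$ as in the statement. Before computing, I record the two structural facts I will lean on. First, every $\lambda_a$ (for $a\in M$) lies in $\Aut(M,+)$ and preserves the generating set $X$: writing $a$ as a $\circ$-product of generators $a=x_{i_1}\circ\cdots\circ x_{i_m}$ and using that $\lambda$ is a homomorphism $(M,\circ)\to\Aut(M,+)$ gives $\lambda_a=\lambda_{x_{i_1}}\cdots\lambda_{x_{i_m}}$, a composite of permutations of $X$. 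Second, from $a+b=b+\sigma_b(a)$ one gets $\sigma_b(a)=-b+a+b$ inside the group $A_g(X,r)\supseteq(M,+)$, so each $\sigma_b$ is conjugation by $b$; in particular $\sigma_b\in\Aut(M,+)$, it restricts to the derived-rack permutation $\cdot\triangleleft b$ on $X$, and $\sigma_{kz}=\sigma_z^{k}$ again preserves $X$.

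For the first coordinate, additivity of $\lambda_{kx}$ gives
\[
\lambda_{kx}(ky)=\lambda_{kx}\big(\underbrace{y+\cdots+y}_{k}\big)=k\,\lambda_{kx}(y)=kz,
\]
where $z=\lambda_{kx}(y)\in X$ by the first structural fact; hence the left component lands in $kX$. For the second coordinate I combine the three defining identities. Applying $a\circ b=a+\lambda_a(b)$ to both sides of the monoid relation $a\circ b=\lambda_a(b)\circ\rho_b(a)$ yields $a+\lambda_a(b)=\lambda_a(b)+\lambda_{\lambda_a(b)}(\rho_b(a))$, while $a+c=c+\sigma_c(a)$ with $c=\lambda_a(b)$ gives $a+\lambda_a(b)=\lambda_a(b)+\sigma_{\lambda_a(b)}(a)$. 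Cancelling $\lambda_a(b)$ on the left (legitimate since $(M,+)$ embeds in the group $A_g(X,r)$) produces
\[
\rho_b(a)=\lambda_{\lambda_a(b)}^{-1}\,\sigma_{\lambda_a(b)}(a).
\]
Specialising to $a=kx$, $b=ky$ and inserting $\lambda_{kx}(ky)=kz$ gives $\rho_{ky}(kx)=\lambda_{kz}^{-1}\sigma_{kz}(kx)$; since $\sigma_{kz}$ and $\lambda_{kz}^{-1}$ are additive and preserve $X$, this equals $k\,\lambda_{kz}^{-1}\sigma_{kz}(x)=kw$ with $w\in X$. Thus both coordinates of $r_M(kx,ky)$ lie in $kX$, proving closure.

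I expect the second coordinate to be the only real obstacle: the left coordinate follows immediately from $\lambda_{kx}$ being an additive automorphism permuting $X$, whereas controlling $\rho_{ky}(kx)$ requires the bookkeeping identity $\rho_b(a)=\lambda_{\lambda_a(b)}^{-1}\sigma_{\lambda_a(b)}(a)$ together with the facts that $\sigma_{kz}$ is additive, equals $\sigma_z^{k}$, and preserves $X$. Once closure is established, finiteness of $kX$ upgrades the restriction $r_k=r_M|_{kX\times kX}$ to a genuine subsolution $(kX,r_k)$ of $(M,r_M)$, as claimed.
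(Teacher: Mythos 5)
Your proof follows essentially the same route as the paper's: both arguments reduce the statement to closure of $kX$ under $r_M$, handle the first coordinate by additivity of $\lambda_{kx}$, and handle the second coordinate via the identity $\rho_b(a)=\lambda_{\lambda_a(b)}^{-1}\sigma_{\lambda_a(b)}(a)$ together with additivity of $\sigma$ and $\lambda$; you merely make explicit (correctly) that these maps also preserve the generating set $X$, which the paper leaves implicit.

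There is, however, one step whose justification is wrong as written. You derive the identity $\rho_b(a)=\lambda_{\lambda_a(b)}^{-1}\sigma_{\lambda_a(b)}(a)$ by cancelling $\lambda_a(b)$ on the left in $(M,+)$, declaring this ``legitimate since $(M,+)$ embeds in the group $A_g(X,r)$''. That embedding fails in general: the whole reason this paper works with the structure monoid rather than the structure group is that the canonical maps to $G(X,r)$ and $A_g(X,r)$ need not be injective, and indeed $(M,+)\cong A(X,r)$ need not be left cancellative. (For instance, for the permutation solution $r(x_k,x_j)=(x_j,x_{\sigma(k)})$ with $\sigma$ an $n$-cycle, one has $x_1+x_1=x_1+x_2$ in $A(X,r)$ while $x_1\neq x_2$ there.) Fortunately the identity you want does not need cancellation at all: at the level of the solution it is immediate from the definition $\sigma_y(x)=\lambda_y\rho_{\lambda_x^{-1}(y)}(x)$ upon substituting $y=\lambda_a(b)$, and this is exactly the form the paper invokes for elements of $M$. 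Replacing your cancellation argument by this direct appeal to the definition of $\sigma$ repairs the proof; everything else goes through as you wrote it.
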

\begin{proof}
Note that the maps $\lambda_a$ and $\sigma_a$ are both automorphisms of $(M(X,r),+)$, hence $\lambda_a(kx)=k\lambda_a(x)$ and $\sigma_a(kx)=k\sigma_a(x)$. As $$ \rho_b(kx)=\lambda_{\lambda_{kx}(b)}^{-1}\sigma_{\lambda_{kx}(b)}(kx)=k\lambda_{\lambda_{kx}(b)}^{-1}\sigma_{\lambda_{kx}(b)}(x),$$  it follows that $kX$ gives rise to a subsolution of $M(X,r)$.
\end{proof}

\begin{defn}
    Let $(X,r)$ be a biquandle. Denote the bijection $\varphi:X\rightarrow kX$ with $\varphi(x)=kx$ and denote $r^{(k)}=(\varphi^{-1}\times \varphi^{-1})r_k(\varphi\times\varphi)$. Then $(X,r^{(k)})$ is called the \emph{$k$-cabled solution} of $(X,r)$.
\end{defn}

Note that an involutive solution $(X,r)$ is injective, and thus a biquandle, and the cabling defined in \cite{cablingorig} for involutive solutions coincides with the construction above, as $M(X,r)$ is embedded in $G(X,r)$ \cite[Corollary 1.5]{MR1637256}.

    \begin{prop}\label{cablinginjective:pro}
        Let $(X,r)$ be a biquandle. Let $k$ be a positive integer, then the $k$-cabled solution $(X,r^{(k)})$ is a biquandle.
    \end{prop}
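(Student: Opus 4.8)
The plan is to transport the question into the monoid $M(X,r)$ and reduce it to a one-line identity. By construction $\varphi\colon (X,r^{(k)})\to (kX,r_k)$ is an isomorphism of solutions, and being a biquandle is an isomorphism invariant, so it suffices to prove that the subsolution $(kX,r_k)$ of $(M,r_M)$ is a biquandle. Recalling that a solution is a biquandle precisely when its derived rack is a quandle, and that the derived rack operation is $y\triangleleft x=\sigma_x(y)$, the goal becomes the single identity $\sigma_a(a)=a$ for every $a\in kX$, i.e. $\sigma_{kx}(kx)=kx$ for all $x\in X$.

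First I would observe that for a subsolution the derived rack operation is simply the restriction of the ambient one: since $kX$ is closed under $r_M$ by Proposition~\ref{prop:cabledexist}, the maps $\lambda_a,\rho_b$ of $r_k$ are the restrictions of the corresponding maps of $r_M$, so $a\triangleleft a$ computed inside $kX$ coincides with $\sigma_a(a)$ computed in $M$. This is what makes the reduction above legitimate.

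Next I would exploit the additive structure $(M,+)=A(X,r)$ together with two facts from the preliminaries: each $\sigma_a$ is an automorphism of $(M,+)$, and on $\langle\sigma_x^{-1}\mid x\in X\rangle$ the additive and multiplicative operations coincide, so that $a\mapsto\sigma_a^{-1}$ is a group homomorphism out of $(M,+)$ and hence $\sigma_{kx}=\sigma_x^{\,k}$. Since $(X,r)$ is assumed to be a biquandle, its derived rack is a quandle, giving $\sigma_x(x)=x\triangleleft x=x$ for every generator $x$; iterating yields $\sigma_x^{\,k}(x)=x$. Writing $kx$ as the $k$-fold sum $x+\cdots+x$ in $(M,+)$ and using additivity of $\sigma_x^{\,k}$, I would then conclude $\sigma_{kx}(kx)=\sigma_x^{\,k}(kx)=k\,\sigma_x^{\,k}(x)=kx$, which is exactly the required identity.

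Once the reduction to $\sigma_a(a)=a$ is in place the remaining steps are bookkeeping, so I do not anticipate a genuine obstacle. The one point demanding care — which I would state explicitly — is the interplay between the two structures on $M$: one must keep $kx$ as the additive multiple in $(M,+)$ (as in the definition of $kX$), use that $\sigma_x$ acts additively on it, and invoke $\sigma_{kx}=\sigma_x^{\,k}$ correctly. Conflating the additive and multiplicative operations of $M$ is the only place where the argument could go wrong.
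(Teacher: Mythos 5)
Your proposal is correct and follows essentially the same route as the paper: the paper's proof is exactly the computation $\sigma_{kx}(kx)=k\sigma_{kx}(x)=k\sigma_x^k(x)=kx$, resting on the identity $\sigma_{kx}=\sigma_x^k$, the additivity of $\sigma_{kx}$ on $(M,+)$, and $\sigma_x(x)=x$ from the biquandle hypothesis. Your additional remarks on why the derived rack operation restricts to the subsolution $kX$ and why being a biquandle transports along $\varphi$ are correct bookkeeping that the paper leaves implicit.
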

    \begin{proof}
        The result follows from the fact that $\sigma_{kx}=\sigma_{x}^k$. Hence, $$\sigma_{kx}(kx)=k\sigma_{kx}(x)=k\sigma_{x}^k(x)=kx.$$
    \end{proof}

Crucial for using cabling to construct new solutions with interesting properties from old ones is the following proposition, which shows that cabling preserves morphisms.
    \begin{prop}\label{cablingendofunctor:pro}
    Let $f:(X,r) \rightarrow (Y,s)$ be a morphism of biquandles and $k$ a positive integer. Then, $f$ is a morphism between the cabled biquandles $(X,r^{(k)})$ and $(Y,s^{(k)})$.
    \end{prop}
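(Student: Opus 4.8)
The plan is to show that the morphism property transfers from $f$ on the original solutions to $f$ on the cabled solutions by tracking how $f$ interacts with the structure monoid. The key observation is that a morphism of solutions $f\colon (X,r)\rightarrow (Y,s)$ induces a monoid homomorphism $\hat f\colon M(X,r)\rightarrow M(Y,s)$ sending the generator $x$ to the generator $f(x)$; this is well-defined precisely because $f$ respects the defining relations of the Yang--Baxter monoid. Moreover, since $f$ also preserves the derived solution (the derived operation $\sigma$ is built from $\lambda,\rho$, which $f$ intertwines), the map $\hat f$ is additionally a homomorphism for the additive structure $(M,+)$, hence commutes with multiplication by $k$: one has $\hat f(kx)=k\,f(x)$ in $M(Y,s)$. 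This is the bridge that lets the $k$-fold additive cabling descend along $f$.

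First I would establish that $\hat f$ intertwines the $\lambda$-actions and the extended maps $r_M$, i.e. that $\hat f(\lambda_a(b))=\lambda_{\hat f(a)}(\hat f(b))$ and similarly for $\rho$, so that $\hat f$ is a morphism of the monoid solutions $(M(X,r),r_M)\rightarrow (M(Y,s),s_M)$. This follows because these identities hold on generators by hypothesis on $f$, and both sides are compatible with the monoid and cocycle structure, so they extend multiplicatively. Next I would restrict $\hat f$ to the cabling subsolution. By Proposition~\ref{prop:cabledexist}, $kX=\{(kx,\lambda_{kx})\mid x\in X\}$ is a subsolution of $(M(X,r),r_M)$ and likewise $kY$ of $(M(Y,s),s_M)$; since $\hat f(kx)=k\,f(x)$ and $\hat f$ intertwines the $\lambda$'s, $\hat f$ maps $kX$ into $kY$ and is a morphism between these subsolutions $(kX,r_k)\rightarrow (kY,s_k)$.

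Finally I would transport this back to $X$ and $Y$ through the bijections $\varphi_X\colon X\rightarrow kX$, $\varphi_X(x)=kx$ and $\varphi_Y\colon Y\rightarrow kY$, $\varphi_Y(y)=ky$. By definition $r^{(k)}=(\varphi_X^{-1}\times\varphi_X^{-1})r_k(\varphi_X\times\varphi_X)$ and analogously for $s^{(k)}$. The computation $\hat f\circ\varphi_X=\varphi_Y\circ f$ (both send $x$ to $k\,f(x)$) shows that the square relating $f$, $\varphi_X$, $\varphi_Y$ and the restriction of $\hat f$ commutes, so conjugating the morphism $(kX,r_k)\rightarrow(kY,s_k)$ by these bijections yields exactly that $f$ is a morphism $(X,r^{(k)})\rightarrow (Y,s^{(k)})$.

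The main obstacle I anticipate is verifying that $\hat f$ is genuinely well-defined and intertwines \emph{both} the multiplicative ($\circ$) and additive ($+$) structures on the monoids, since the cabling lives in the additive structure while the solution $r_M$ is phrased via $\lambda,\rho$; the crux is the identity $\hat f(kx)=k\,f(x)$, which requires knowing that $f$ induces a morphism of the derived/additive monoids $A(X,r)\rightarrow A(Y,s)$ compatibly with the bijective $1$-cocycle $\pi$. Once the compatibility of $\hat f$ with $\pi$ and the $\lambda$-action is in hand, the remaining steps are formal diagram-chasing with the conjugation by $\varphi_X,\varphi_Y$.
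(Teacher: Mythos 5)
Your proposal is correct and follows essentially the same route as the paper: the paper likewise extends $f$ via the universal property to monoid morphisms $f_M\colon M(X,r)\rightarrow M(Y,s)$ and $f_A\colon A(X,r)\rightarrow A(Y,s)$ satisfying $\pi f_M=f_A\pi$, uses $f_A(kx)=kf_A(x)$ to restrict $f_M$ to the cabled subsolutions, and then identifies the restriction with $f$ through the bijections $x\mapsto kx$. Your extra care about the intertwining of the $\lambda$-actions and the conjugation by $\varphi_X,\varphi_Y$ only makes explicit steps the paper leaves implicit.
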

    \begin{proof}
        Let $f:(X,r) \rightarrow (Y,s)$ be a morphism of biquandles, then, via the universal property of the monoids $M(X,r)$ and $A(X,r)$, the morphism $f$ extends to a morphism of semigroups $f_M:M(X,r) \longrightarrow M(Y,s)$ and morphism $f_A: A(X,r) \rightarrow A(Y,s)$, such that $\pi f_M = f_A\pi$. As $f_A(kx) = kf_A(x)$ for all $x \in X$, the map $f_M$ restricts to a morphism of solutions between $\left\lbrace \pi^{-1}(kx)\mid x \in X \right\rbrace$ and $\left\lbrace \pi^{-1}(y) \mid y \in Y\right\rbrace$. This entails that then the map $f^{(k)}:(X,r^{(k)})\rightarrow (Y,s^{(k)})$ with $f^{(k)}(x)=f(x)$ is a morphism of solutions.
    \end{proof}
    Summarizing Proposition~\ref{cablinginjective:pro} and Proposition~\ref{cablingendofunctor:pro}, cabling is a functor from the category of biquandles to itself. 

    \begin{rem}
        Note that the cabling functor is faithful. Indeed, a morphism of biquandles is a set-theoretic map, and cabling does not change this map. Hence, distinct morphisms remain distinct under cabling.
    \end{rem}
    
    The following proposition relates the cabling of a cabled solution to a cabling of the original solution.
    \begin{prop}\label{cablingofcabling:pro}
        Let $(X,r)$ be a biquandle. Let $k$ and $k'$ be positive integers, then  $$(X,(r^{(k)})^{(k')})=(X,r^{(kk')}).$$
    \end{prop}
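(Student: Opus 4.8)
The plan is to realise both iterated cablings inside the single structure monoid $M(X,r)$ and to relate them by one morphism of solutions. First note that, by Proposition~\ref{cablinginjective:pro}, $(X,r^{(k)})$ is again a biquandle, so its $k'$-cabling is defined. Now, by Proposition~\ref{prop:cabledexist} the set $kX$ is a subsolution $(kX,r_k)$ of $(M(X,r),r_M)$ and $\varphi\colon (X,r^{(k)})\to (kX,r_k)$, $\varphi(x)=kx$, is by definition an isomorphism of solutions. Since $r_k=r_M|_{kX}$, the defining relations of the structure monoid $M(kX,r_k)$ already hold in $M(X,r)$, so the inclusion $kX\hookrightarrow M(X,r)$ extends, by the universal property of the structure monoid, to a monoid homomorphism $M(kX,r_k)\to M(X,r)$. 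Precomposing with the isomorphism $M(X,r^{(k)})\cong M(kX,r_k)$ induced by $\varphi$ (as in the proof of Proposition~\ref{cablingendofunctor:pro}) yields a homomorphism $\Phi\colon M(X,r^{(k)})\to M(X,r)$ with $\Phi(x)=kx$ on generators. As in Proposition~\ref{cablingendofunctor:pro}, $\Phi$ is accompanied by an additive homomorphism $\Phi_A\colon A(X,r^{(k)})\to A(X,r)$ compatible with $\Phi$ through the respective $1$-cocycles; hence $\Phi$ respects both operations $\circ$ and $+$, and since $a\circ b=a+\lambda_a(b)$ this makes $\Phi$ a morphism of solutions $(M(X,r^{(k)}),r^{(k)}_M)\to(M(X,r),r_M)$.

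The key step is to track the generators of the second cabling. Write $\psi\colon X\to M(X,r^{(k)})$, $\psi(x)=$ the $k'$-fold sum of $x$ in the additive structure of $M(X,r^{(k)})$, for the embedding defining the $k'$-cabling of $(X,r^{(k)})$, and $\chi\colon X\to M(X,r)$, $\chi(x)=kk'x$, for the embedding defining the $kk'$-cabling of $(X,r)$. Because $\Phi$ respects $+$ and sends each generator $x$ to $kx$, it sends the $k'$-fold additive sum of $x$ in $M(X,r^{(k)})$ to the $k'$-fold additive sum of $kx$ in $M(X,r)$, that is $\Phi\psi(x)=k'\cdot(kx)=kk'x=\chi(x)$. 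Thus $\chi=\Phi\circ\psi$, and since $\psi$ and $\chi$ are bijections onto their images, $\Phi$ restricts to a bijection $\Phi|_{\psi(X)}=\chi\circ\psi^{-1}$ from the $k'$-cabling generating set onto the $kk'$-cabling generating set $kk'X$; in particular $\chi^{-1}\Phi=\psi^{-1}$ on $\psi(X)$.

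Finally, as $\Phi$ is a morphism of solutions it intertwines $r^{(k)}_M$ with $r_M$, and hence their restrictions to these two generating subsolutions, giving $\Phi\circ (r^{(k)})_{k'}=r_{kk'}\circ(\Phi\times\Phi)$ on $\psi(X)\times\psi(X)$. Substituting $\chi=\Phi\psi$ and $\chi^{-1}\Phi=\psi^{-1}$ into the definition $r^{(kk')}=(\chi^{-1}\times\chi^{-1})\,r_{kk'}\,(\chi\times\chi)$ then collapses it to $(\psi^{-1}\times\psi^{-1})\,(r^{(k)})_{k'}\,(\psi\times\psi)=(r^{(k)})^{(k')}$, which is the claimed equality of maps on $X\times X$. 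I expect the main obstacle to be bookkeeping rather than conceptual: one must check carefully that the extension $\Phi$ is well defined and respects both monoid operations, so that additive $k'$-multiples in $M(X,r^{(k)})$ are carried to additive $kk'$-multiples in $M(X,r)$. Everything beyond the functoriality already recorded in Proposition~\ref{cablingendofunctor:pro} reduces to this single identity $\Phi\psi=\chi$, which follows from additivity of $\Phi$ together with $\Phi(x)=kx$.
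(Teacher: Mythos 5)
Your proof is correct, and it rests on exactly the same key fact as the paper's own argument: the $k'$-fold additive multiple of $x$ formed in $M(X,r^{(k)})$ corresponds to the $kk'$-fold multiple $kk'x$ in $M(X,r)$, which the paper records as the one-line computation $\lambda_x^{(k)^{(k')}}(y)=\lambda_{k'x}^{(k)}(y)=\lambda_{kk'x}(y)=\lambda_x^{(kk')}(y)$ together with the analogous identity for $\sigma$. Your morphism $\Phi\colon M(X,r^{(k)})\to M(X,r)$ with $\Phi\psi=\chi$ is a more formal packaging of that same identity, so the approach is essentially the paper's, just with the implicit step (additivity of the induced map, hence composition of multiples) spelled out.
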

    \begin{proof}
       Let $x,y \in X$. Then, one obtains that $$\lambda_x^{(k)^{(k')}}(y)=\lambda_{k'x}^{(k)}(y)=\lambda_{kk'x}(y)=\lambda_x^{(kk')}(y). $$ Similarly, one obtains that $\sigma_x^{(k)^{(k')}}=\sigma^{(kk')}_x$, which shows the result.
    \end{proof}
    Note that this entails that if one considers a biquandle $(X,r)$ with Dehornoy class $d$ and all its cablings, then the multiplicative monoid of $\mathbb{Z}_d$, i.e. integers modulo $d$, acts via cabling on this set, where $(X,r^{(0)})$ denotes $(X,r^{(d)})$.

    Moreover, Propositions~\ref{cablingendofunctor:pro} and \ref{cablingofcabling:pro} allow us to show that cabling preserves several classes of biquandles.

    \begin{prop}
        Let $S$ be a non-empty class of biquandles closed under cabling. Denote $T$ (resp. $T_{epi}$) the class of biquandles that have a homomorphism (resp. epimorphism) to an element in $S$, then $T$ (resp. $T_{epi}$) is closed under cabling. In particular, decomposability of biquandles is preserved under cabling. 
    \end{prop}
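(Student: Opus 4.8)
The plan is to reduce the whole statement to the functoriality of cabling recorded in Proposition~\ref{cablingendofunctor:pro}, together with one elementary observation: for a morphism $f\colon(X,r)\to(Y,s)$ of biquandles, the cabled morphism $f^{(k)}\colon(X,r^{(k)})\to(Y,s^{(k)})$ is defined by $f^{(k)}(x)=f(x)$, so $f$ and $f^{(k)}$ agree as maps of underlying sets. In particular $f^{(k)}$ is surjective exactly when $f$ is. Everything else will be formal.

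First I would handle $T$. Fix a positive integer $k$ and take $(X,r)\in T$, witnessed by a morphism $f\colon(X,r)\to(Y,s)$ with $(Y,s)\in S$. By Proposition~\ref{cablingendofunctor:pro} the map $f^{(k)}\colon(X,r^{(k)})\to(Y,s^{(k)})$ is a morphism of biquandles, and since $S$ is closed under cabling we have $(Y,s^{(k)})\in S$. Hence $(X,r^{(k)})$ maps into an element of $S$, that is $(X,r^{(k)})\in T$, giving closure of $T$. The case of $T_{epi}$ is the same argument, now invoking the observation above: if the witnessing $f$ is an epimorphism then so is $f^{(k)}$, whence $(X,r^{(k)})\in T_{epi}$.

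For the claim about decomposability I would specialise to the singleton class $S=\{(\{0,1\},\tau)\}$, with $\tau$ the twist map. Two things must be checked: that $(\{0,1\},\tau)$ is a biquandle, which holds because its derived rack is the trivial quandle (indeed $\lambda_x=\rho_x=\id$ forces $x\triangleleft y=x$); and that $S$ is closed under cabling. For the latter, since $\lambda$ and $\rho$ are identities on the twist, the generators $kx$ of the associated monoid $M(\{0,1\},\tau)$ again satisfy $\lambda_{kx}=\id$ and $\rho_{ky}=\id$, so $r_k$ restricts to the flip on $k\{0,1\}$ and $(\{0,1\},\tau^{(k)})=(\{0,1\},\tau)$. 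With $S$ closed under cabling, Corollary~\ref{decomphomomorph:pro} identifies the class $T_{epi}$ attached to this $S$ with the class of decomposable biquandles, and its closure under cabling is precisely the asserted invariance of decomposability.

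The hard part, such as it is, will be pinning down the behaviour of the twist solution under cabling, i.e. verifying that $S=\{(\{0,1\},\tau)\}$ is closed under cabling; this is the one place a genuine (if short) computation in $M(\{0,1\},\tau)$ is required, and it is what allows the general closure statement to specialise to decomposability. The two closure statements for $T$ and $T_{epi}$ themselves are immediate from Proposition~\ref{cablingendofunctor:pro} and the equality of $f$ with $f^{(k)}$ as set maps.
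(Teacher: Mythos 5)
Your proposal is correct and follows essentially the same route as the paper: closure of $T$ and $T_{epi}$ is deduced from Proposition~\ref{cablingendofunctor:pro} together with the fact that $f^{(k)}$ equals $f$ as a set map, and decomposability is handled by specialising to $S=\{(\{0,1\},\tau)\}$ via Corollary~\ref{decomphomomorph:pro}. The only difference is that you spell out the (easy) verification that the twist solution is a biquandle invariant under cabling, which the paper states without proof.
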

    \begin{proof}
        Let $(X,r)$ be a biquandle with a homomorphism to an element of $S$, say $(Y,s)$. Denote this homomorphism $\varphi:(X,r) \rightarrow (Y,s)$. Then, for any positive integer $k$ the $k$-cabling $(Y,s^{(k)})\in S$. By Proposition \ref{cablingendofunctor:pro}, the induced map $\varphi^{(k)}:(X,r^{(k)})\rightarrow (Y,s^{(k)})$ is a homomorphism of solutions, so $(X,r^{(k)})\in T$. Note that if $\varphi$ is an epimorphism, the same holds for $\varphi^{(k)}$.

        Remark that the class of decomposable biquandles coincides with the class of biquandles with an epimorphism to the solution $(\left\lbrace0,1\right\rbrace, \tau)$, where $\tau(x,y)=(y,x)$, as shown in Proposition~\ref{decomphomomorph:pro}. As this solution is invariant under cabling, the result follows.
    \end{proof}
    
    \begin{prop}\label{cablingsimpleindec:pro}
          Let $S$ be a non-empty class of biquandles closed under cabling and let $d$ be a positive integer. Denote the class of biquandles of Dehornoy class dividing $d$ that do not have a homomorphism to any element of $S$ as $W$, then $W$ is closed under $k$-cabling with $k$ a positive integer coprime to $d$. In particular, if $(X,r)$ is a simple (resp. indecomposable) biquandle of Dehornoy class $d$, then $(X,r^{(k)})$ is a simple (resp. indecomposable) biquandle if $k$ is coprime to $d$.
    \end{prop}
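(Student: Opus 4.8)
The plan is to exploit that, for $k$ coprime to $d$, cabling by $k$ is an \emph{invertible} operation on the class $\mathcal{C}_d$ of biquandles of Dehornoy class dividing $d$, its inverse being cabling by any $k'$ with $kk'\equiv 1\pmod d$. The starting observation is the reformulation that a biquandle has Dehornoy class dividing $e$ exactly when its $e$-cabling is the twist solution, since $t^{(e)}=\tau$ unwinds to $\lambda_{ex}=\id=\sigma_{ex}$ for all $x$. With this, $\mathcal{C}_d$ is closed under arbitrary cabling: if $t\in\mathcal{C}_d$ then for every positive integer $j$ Proposition~\ref{cablingofcabling:pro} gives $(t^{(j)})^{(d)}=t^{(jd)}=(t^{(d)})^{(j)}=\tau^{(j)}=\tau$, so $t^{(j)}\in\mathcal{C}_d$. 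Finally, choosing $k'$ with $kk'\equiv 1\pmod d$, Proposition~\ref{cablingofcabling:pro} together with the $\mathbb{Z}_d$-action noted after it yields $(t^{(k)})^{(k')}=t^{(kk')}=t^{(1)}=t$ (and symmetrically with $k,k'$ swapped), so on $\mathcal{C}_d$ cabling by $k$ and by $k'$ are mutually inverse.

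With invertibility in hand, the closure of $W$ is a transport-of-structure argument. Suppose $(X,r)\in W$ but, for contradiction, there were a homomorphism $\varphi\colon(X,r^{(k)})\to(Y,s)$ with $(Y,s)\in S$. Cabling $\varphi$ by $k'$ and invoking Proposition~\ref{cablingendofunctor:pro}, I obtain a homomorphism $\varphi^{(k')}\colon(X,(r^{(k)})^{(k')})\to(Y,s^{(k')})$. By the previous paragraph the source is $(X,r)$, while the target lies in $S$ because $S$ is closed under cabling; this contradicts $(X,r)\in W$. Since $r^{(k)}\in\mathcal{C}_d$, it follows that $(X,r^{(k)})\in W$. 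The very same argument, using that cabling preserves epimorphisms, shows the analogous class $W_{\mathrm{epi}}$ (no \emph{epimorphism} to an element of $S$) is closed under $k$-cabling. Taking $S=\{(\{0,1\},\tau)\}$, which is closed under cabling since the twist is a cabling fixed point, and recalling from Corollary~\ref{decomphomomorph:pro} that decomposability is exactly the existence of an epimorphism onto $(\{0,1\},\tau)$, the indecomposable case follows.

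For simplicity I would argue directly, reading the above as saying cabling by $k$ is a self-equivalence of $\mathcal{C}_d$. Let $\psi\colon(X,r^{(k)})\to(Z,u)$ be any epimorphism. Cabling $\psi$ by $d$ produces an epimorphism from the twist $(X,(r^{(k)})^{(d)})$ onto $(Z,u^{(d)})$; as any quotient of a twist solution is again a twist, $u^{(d)}$ is the twist and hence $(Z,u)\in\mathcal{C}_d$. Now cable $\psi$ by $k'$ to get an epimorphism $\psi^{(k')}\colon(X,r)\to(Z,u^{(k')})$. Simplicity of $(X,r)$ forces $(Z,u^{(k')})$ to be a singleton or isomorphic to $(X,r)$; cabling back by $k$ and using that the (faithful) cabling functor preserves isomorphisms and singletons, together with $(u^{(k')})^{(k)}=u^{(k'k)}=u$, then gives that $(Z,u)$ is a singleton or isomorphic to $(X,r^{(k)})$, proving $(X,r^{(k)})$ simple. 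Since $(X,r)$ of Dehornoy class $d$ lies in $\mathcal{C}_d$, both the simple and indecomposable conclusions apply.

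The main obstacle I anticipate is justifying the identity $(t^{(k)})^{(k')}=t$ and the stability of membership in $\mathcal{C}_d$, both of which rest on reading cabling as a genuine action of the multiplicative monoid of $\mathbb{Z}_d$ (so that the cabling index matters only modulo $d$); this is exactly the content asserted in the remark following Proposition~\ref{cablingofcabling:pro}, and the coprimality of $k$ and $d$ enters precisely to make the class of $k$ a unit of that monoid. A secondary point to handle with care in the simple case is verifying that an epimorphic image $(Z,u)$ remains of Dehornoy class dividing $d$, so that cabling it by $k'$ is legitimate; the functoriality-plus-twist argument above is what secures this.
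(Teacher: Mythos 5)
Your proof is correct and follows essentially the same route as the paper's: the core step in both is to cable the putative (epi)morphism out of $(X,r^{(k)})$ by an inverse $k'$ of $k$ modulo $d$, use Propositions~\ref{cablingendofunctor:pro} and~\ref{cablingofcabling:pro} to land back on $(X,r)=(X,r^{(kk')})$, and derive a contradiction with membership in $W$. Your direct treatment of the simple case and the explicit verification that epimorphic images remain of Dehornoy class dividing $d$ are somewhat more detailed than the paper's one-line reduction to ``no epimorphism onto a biquandle of intermediate size,'' but the mechanism is identical.
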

    \begin{proof}
        Let $(X,r)$ be a biquandle with Dehornoy class dividing $d$ that does not have a homomorphism to any element of $S$. Let $k$ be a positive integer coprime with $d$. Suppose $\varphi:(X,r^{(k)})\rightarrow (Y,s)$ is a homomorphism of solutions with $(Y,s) \in S$. Denote $l$ a positive integer such that $kl =1$ modulo $d$. Then, by Proposition \ref{cablingofcabling:pro}, one has that $\varphi^{(l)}: (X,r^{kl})\rightarrow (Y,s^{(l)})$ is the $l$-cabled homomorphism of solutions with $(Y,s^{(l)})\in S$. However, $(X,r^{(kl)})=(X,r)$ which contradicts that $(X,r) \in W$.

        Note that a  simple (resp. indecomposable) biquandle of size $n$ can be characterized as a biquandle with no epimorphism to an biquandle of size between $2$ and $n-1$ (resp. to the solution $(\{0,1\},\tau)$, where $\tau(x,y)=(y,x)$, by Proposition \ref{decomphomomorph:pro}).
    \end{proof}

    Note that a simple solution  is either isomorphic to $$r:\mathbb{Z}_p^2\to \mathbb{Z}_p^2, (x,y)\mapsto(y+a,x+b)$$ with $a,b, \in\mathbb{Z}_p$ and $(a,b)\neq (0,0)$ or injective, and hence a biquandle \cite{CJKAV24x}. Thus, the restriction to work with biquandles in Proposition~\ref{cablingsimpleindec:pro} is non-restrictive.

    Moreover, it is worth noting that the class of injective solutions is not closed under epimorphic images. An example can be constructed using \textsc{GAP} \cite{gap4}. Consider the quandle \texttt{SmallQuandle(24,21)} from the package \texttt{Rig} \cite{rig}, which is a conjugacy class of the group \texttt{SmallGroup(216,153)} from the \texttt{Small Groups Library} \cite{smallgroups}. This group has order 216 and admits an epimorphic image in the non-injective quandle \texttt{SmallQuandle(8,1)} of order 8 (see \cite[Example 4.5]{MR3974961} for an explicit description of this quandle and the proof that it is non-injective).
    
    One may also ask whether cabling with a parameter coprime to the Dehornoy class can produce non-isomorphic solutions. The following example shows that this is indeed the case.

\begin{ex}
    Let $X=\{1,2,3,4,5, 6\}$ and consider $(X,r)$ the involutive solution on $X$ given by 
    \begin{align*}
        \lambda_1&=(1, 2, 3)\\
         \lambda_2 &=(1,2,3)(4,5,6)\\
         \lambda_3&= (1,2,3)(4,6,5)
    \end{align*}
    and $\lambda_x=(4,5,6)$ for $x\in\{4,5,6\}$.
    The Dehornoy class of $(X,r)$ is $3$. The $2$-cabled solution has
        \begin{align*}
        \lambda_1&=(1,3,2)(4,6,5)\\
         \lambda_2 &=(1,3,2)(4,5,6)\\
         \lambda_3&= (1,3,2) 
    \end{align*}
    and $\lambda_x=(4,6,5)$ for $x\in\{4,5,6\}$. The $2$-cabled solution is clearly not isomorphic to the original one.
\end{ex}

Since the retract plays an important role in the study of solutions we first note that injectivity is preserved by retraction.
    \begin{lem}
        Let $(X,r)$ be an injective solution. Then the retract of $(X,r)$ is injective.
    \end{lem}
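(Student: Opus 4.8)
The plan is to realise $\Ret(X,r)$ as a subsolution of the solution attached to the permutation skew brace $\mathcal{G}(X,r)$, and then to exploit the fact that solutions coming from skew braces are always injective. Note that this route does not actually use the hypothesis that $(X,r)$ is injective, so it will prove the (formally stronger) statement that the retract of \emph{any} solution is injective.

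First I would record that the canonical map $q\colon X \to \mathcal{G}(X,r)$, $x\mapsto(\lambda_x,\rho_x^{-1})$, is a morphism of solutions from $(X,r)$ to $(\mathcal{G}(X,r),r_{\mathcal{G}})$, where $r_{\mathcal{G}}$ is the solution associated with the skew brace $\mathcal{G}(X,r)$ by the formula in the introduction. This is because $q$ is the composite of the canonical solution morphism $X\to G(X,r)$ with the skew brace epimorphism $G(X,r)\to\mathcal{G}(X,r)$, both of which are morphisms of solutions. Since $q(x)=q(y)$ holds precisely when $\lambda_x=\lambda_y$ and $\rho_x=\rho_y$, the fibres of $q$ are exactly the retraction classes, so $q$ factors as a surjection onto $\Ret(X,r)$ followed by an injective morphism $\bar q\colon\Ret(X,r)\to(\mathcal{G}(X,r),r_{\mathcal{G}})$, and the induced solution on the image coincides with the retract solution. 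As the image of a morphism of solutions is a subsolution of the target, this exhibits $\Ret(X,r)$ as (isomorphic to) a subsolution of $(\mathcal{G}(X,r),r_{\mathcal{G}})$.

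The conceptual engine is then to show that $(B,r_B)$ is injective for any skew brace $B$. For this I would use the identity $a\circ b=\lambda_a(b)\circ\rho_b(a)$, valid in $(B,\circ)$ (a direct check from $\lambda_a(b)=-a+a\circ b$ and $\rho_b(a)=(a'+b)'\circ b$, using $\lambda_a(a')=-a$). This identity shows that the assignment sending each generator $b$ of $G(B,r_B)$ to $b\in(B,\circ)$ respects the defining relations, hence extends to a group homomorphism $\psi\colon G(B,r_B)\to(B,\circ)$ with $\psi\circ i_B=\id_B$, where $i_B\colon B\to G(B,r_B)$ is the canonical map. A map admitting a left inverse is injective, so $i_B$ is injective and $(B,r_B)$ is injective. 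In particular $(\mathcal{G}(X,r),r_{\mathcal{G}})$ is injective.

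Finally I would combine this with the elementary observation that a subsolution $(Y,s)$ of an injective solution $(Z,t)$ is itself injective: the inclusion induces a group homomorphism $G(Y,s)\to G(Z,t)$ compatible with the canonical maps, and since the composite $Y\hookrightarrow Z\hookrightarrow G(Z,t)$ is injective, the map $Y\to G(Y,s)$ must already be injective. Applying this to $\Ret(X,r)\subseteq(\mathcal{G}(X,r),r_{\mathcal{G}})$ gives the result. I expect the main obstacle to be the bookkeeping in the second paragraph, namely verifying carefully that $q$ is a morphism of solutions whose fibres are precisely the retraction classes and that the image carries exactly the retract solution; by contrast, the injectivity of skew brace solutions via $\psi$ is short once the identity $a\circ b=\lambda_a(b)\circ\rho_b(a)$ is in hand.
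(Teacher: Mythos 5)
Your proof is correct, but it follows a genuinely different route from the paper's. The paper uses the hypothesis directly: since $(X,r)$ is injective it embeds in $G(X,r)$, and $G(\Ret(X,r))$ is obtained from $G(X,r)$ by factoring out an ideal contained in $\Soc(G(X,r))$ generated by the differences $x-y$ of retract-equivalent generators; one then reads off that distinct retract classes remain distinct in the quotient. You instead realise $\Ret(X,r)$ as a subsolution of $(\mathcal{G}(X,r),r_{\mathcal{G}})$ via $x\mapsto(\lambda_x,\rho_x^{-1})$, prove that the solution associated with any skew brace is injective (the retraction $\psi\colon G(B,r_B)\to(B,\circ)$ induced by $a\circ b=\lambda_a(b)\circ\rho_b(a)$ is a clean and standard argument), and note that injectivity passes to subsolutions. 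Each step checks out: $q$ is a composite of solution morphisms, its fibres are exactly the retract classes (for the usual formulation of the retract via $\lambda_x=\lambda_y$ and $\rho_x=\rho_y$, which for bijective non-degenerate solutions agrees with the $\lambda,\sigma$ formulation used in the paper's proof), and in the finite setting the image of a morphism is a subsolution. What your approach buys is modularity and generality: the injectivity hypothesis is never used, so you obtain that the retract of \emph{any} solution is injective — a statement consistent with, and recoverable from, the paper's observation that $\Inj(X,r)$ is an intermediate quotient between $(X,r)$ and $\Ret(X,r)$. What the paper's approach buys is brevity and coherence with the surrounding machinery of ideals and the socle of $G(X,r)$, at the cost of a somewhat terse "it is clear that" at the key step.
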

    \begin{proof}
        As $(X,r)$ is injective, it is embedded in $G(X,r)$. The structure skew brace of $\Ret(X,r)$ is a homomorphic image of $G(X,r)$ modulo an ideal of $G(X,r)$ contained in $\Soc(G(X,r))$ generated by the generators $x-y$ with $x,y \in X$ with $\lambda_x=\lambda_y$ and $\sigma_x=\sigma_y$. Hence, it is clear that the canonical embedding $\Ret(X,r)\rightarrow G(\Ret(X,r))$ is injective, showing the result.
    \end{proof}

We next show that retraction behaves well with respect to cabling.
    \begin{prop}
        Let $(X,r)$ be a biquandle. If $(X,r)$ is retractable, then every cabling of $(X,r)$ is retractable. In particular, the property of having finite multipermutation level is closed under cabling.
    \end{prop}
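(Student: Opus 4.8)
The plan is to reduce the whole statement to a single inclusion of retraction relations. Write $x\sim y$ for the relation defining $\Ret(X,r)$, namely $\lambda_x=\lambda_y$ and $\sigma_x=\sigma_y$ (the relation used implicitly in the preceding lemma), and write $x\sim^{(k)}y$ for the analogous relation of the cabled biquandle $(X,r^{(k)})$, which by the computations in Propositions~\ref{cablingofcabling:pro} and~\ref{cablinginjective:pro} reads $\lambda_{kx}=\lambda_{ky}$ and $\sigma_x^{k}=\sigma_y^{k}$. Since a biquandle is retractable exactly when its retraction relation is non-trivial, the key point is the inclusion $\sim\ \subseteq\ \sim^{(k)}$: it upgrades any non-trivial identification of $(X,r)$ to one of $(X,r^{(k)})$, giving the first assertion at once (and both $\Ret$ and cabling keep us inside biquandles, by Proposition~\ref{cablinginjective:pro}).

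For the $\sigma$-component the inclusion is immediate, as $\sigma_x=\sigma_y$ forces $\sigma_x^k=\sigma_y^k$. The $\lambda$-component is the crux, and here I would introduce the diagonal map $q\colon X\to X$, $q(v)=\lambda_v^{-1}(v)$, and establish two facts. First, a product formula $\lambda_{kx}=\lambda_{x}\lambda_{q(x)}\lambda_{q^{2}(x)}\cdots\lambda_{q^{k-1}(x)}$, proved by induction on $k$ from the cocycle identity $\lambda_{a+c}=\lambda_a\lambda_{\lambda_a^{-1}(c)}$ (a consequence of $a\circ b=a+\lambda_a(b)$ and the multiplicativity of $\lambda$) together with $\lambda_a^{-1}(kc)=k\lambda_a^{-1}(c)$. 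Second, that $q$ respects $\sim$: if $\lambda_x=\lambda_y$ and $\sigma_x=\sigma_y$, then $\lambda_{q(x)}=\lambda_{q(y)}$ and $\sigma_{q(x)}=\sigma_{q(y)}$. Granting these, iterating the second fact gives $q^{i}(x)\sim q^{i}(y)$ for all $i$, so every factor in the product formula for $\lambda_{kx}$ and $\lambda_{ky}$ agrees and $\lambda_{kx}=\lambda_{ky}$, completing $\sim\subseteq\sim^{(k)}$.

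The main obstacle is this second fact, and it is precisely where the biquandle hypothesis enters. For the $\sigma$-part I would use the conjugation relation $\lambda_a\sigma_b\lambda_a^{-1}=\sigma_{\lambda_a(b)}$ encoded in the $\mathcal{S}(X,r)$-action recalled in Section~\ref{sec:prelim}; taking $a=v$ and $b=q(v)$ yields $\sigma_{q(v)}=\lambda_v^{-1}\sigma_v\lambda_v$, which depends only on the pair $(\lambda_v,\sigma_v)$, whence $\sigma_{q(x)}=\sigma_{q(y)}$. For the $\lambda$-part I would use the braid relation $\lambda_a\lambda_b=\lambda_{\lambda_a(b)}\lambda_{\rho_b(a)}$ (which follows from $\lambda_{a\circ b}=\lambda_a\lambda_b$ and the defining relation $a\circ b=\lambda_a(b)\circ\rho_b(a)$ of $M(X,r)$): specialising $b=\lambda_a^{-1}(a')$ and rewriting $\rho_{\lambda_a^{-1}(a')}(a)=\lambda_{a'}^{-1}\sigma_{a'}(a)$ gives $\lambda_a\lambda_{\lambda_a^{-1}(a')}=\lambda_{a'}\lambda_{\lambda_{a'}^{-1}\sigma_{a'}(a)}$. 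Applying this to $(a,a')=(x,y)$, using $\lambda_x=\lambda_y$ to cancel the leading factors, and invoking the biquandle identity $\sigma_y(x)=\sigma_x(x)=x$ collapses the right inner index to $\lambda_y^{-1}(x)=q(x)$ and the left to $\lambda_x^{-1}(y)=q(y)$, yielding $\lambda_{q(y)}=\lambda_{q(x)}$. It is exactly the quandle condition $\sigma_x(x)=x$ that drives this cancellation, so the statement genuinely belongs to the biquandle world.

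Finally, for the multipermutation level I would first record that $\Ret$ is functorial on epimorphisms of biquandles: an epimorphism $f$ sends $\sim$-classes to $\sim$-classes and induces a surjection $\Ret(f)$, so $\textup{mpl}$ cannot increase along epimorphisms (iterated retraction sends a singleton to a singleton). The inclusion $\sim\subseteq\sim^{(k)}$ lets the cabled retraction map factor as $(X,r^{(k)})\to(\Ret(X,r))^{(k)}\to\Ret(X,r^{(k)})$, the second arrow an epimorphism, where $(\Ret(X,r))^{(k)}$ is the cabling of the retract provided by Proposition~\ref{cablingendofunctor:pro}. An induction on $\textup{mpl}(X,r)$ then gives $\textup{mpl}(X,r^{(k)})\le\textup{mpl}(X,r)$: the base case is a singleton, and in the inductive step $\Ret(X,r^{(k)})$ is an epimorphic image of $(\Ret(X,r))^{(k)}$, whose level is at most $\textup{mpl}(X,r)-1$ by the inductive hypothesis applied to $\Ret(X,r)$. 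In particular, finiteness of the multipermutation level is preserved under cabling.
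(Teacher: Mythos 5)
Your proof is correct, but the route through the $\lambda$-component is genuinely different from the paper's. The paper runs a single simultaneous induction on $n$: from $\lambda_{(n+1)x}=\lambda_x\lambda_{\lambda_x^{-1}(nx)}=\lambda_{y+nx}$ it passes to $\lambda_{nx+\sigma_{nx}(y)}$ via the additive commutation $a+b=b+\sigma_b(a)$, and then closes the induction using $\lambda_{nx}=\lambda_{ny}$ and $\sigma_{nx}=\sigma_{ny}$; notably, this argument never invokes the quandle condition $\sigma_x(x)=x$. You instead expand $\lambda_{kx}=\lambda_x\lambda_{q(x)}\cdots\lambda_{q^{k-1}(x)}$ along the diagonal map and prove the auxiliary lemma that $q$ preserves the retract relation, and it is exactly there that the biquandle hypothesis enters (the step $\sigma_y(x)=\sigma_x(x)=x$ that collapses the inner index to $q(x)$). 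Your approach buys a reusable fact --- compatibility of the diagonal map with retraction, very much in the spirit of Theorem~\ref{thm:diagonal} --- at the cost of an extra lemma and of using the biquandle hypothesis where the paper's induction shows it is not actually needed for this step. You also spell out the ``in particular'' clause in full (functoriality of $\Ret$ on epimorphisms, the factorization $(X,r^{(k)})\to(\Ret(X,r))^{(k)}\to\Ret(X,r^{(k)})$ coming from the inclusion $\sim\,\subseteq\,\sim^{(k)}$, and induction on the multipermutation level), which the paper leaves implicit; that part is welcome and correct.
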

    \begin{proof}
        Let $x,y \in X$ be such that $\lambda_x=\lambda_y$ and $\sigma_x=\sigma_y$. Then, by induction on $n$, we have $$ \sigma_{nx}=\sigma_x\sigma_{(n-1)x} = \sigma_y\sigma_{(n-1)y}=\sigma_{ny}.$$ Thus, it remains to prove that $\lambda_{kx}=\lambda_{ky}$ for all positive integers $k$. We proceed by induction. Note that $$\lambda_{(n+1)x}= \lambda_x \lambda_{\lambda_{x}^ {-1}(nx)} =\lambda_{y}\lambda_{\lambda_y^{-1}(nx)}=\lambda_{y+nx}.$$ Utilizing the defining relation in the additive group, the latter is equal to $\lambda_{nx+\sigma_{nx}(y)}$. By the induction hypothesis, this is further equal to $$\lambda_{nx}\lambda_{\lambda_{nx}^{-1}(\sigma_{nx}(y))} = \lambda_{ny}\lambda_{\lambda_{ny}^{-1}\sigma_{ny}(y)} = \lambda_{ny+\sigma_{ny}(y)}=\lambda_{(n+1)y}. $$  Hence, we have shown that for any positive integer $k$ one has that $\lambda_{kx}=\lambda_{ky}$, which shows the result.
    \end{proof}

    \begin{cor}
        Let $(X,r)$ be a biquandle. Let $(Y,s)$ be a subsolution of $(X,r)$. Then, for any positive integer $k$ the solution $(Y,s^{(k)})$ is a subsolution of $(X,r^{(k)})$. In particular, if $k$ is coprime to the Dehornoy class of $(X,r)$ the subsolutions of $(X,r)$ and $(X,r^{(k)})$ are in bijective correspondence.
    \end{cor}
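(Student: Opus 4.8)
The plan is to establish the subsolution claim first and then deduce the bijective correspondence from the invertibility of cabling under coprimality. For the first part, let $(Y,s)$ be a subsolution of the biquandle $(X,r)$, so that $Y\subseteq X$ is closed under $r$ and $s=r|_{Y\times Y}$. The natural inclusion $\iota\colon(Y,s)\hookrightarrow(X,r)$ is a morphism of biquandles, so by Proposition~\ref{cablingendofunctor:pro} the induced map $\iota^{(k)}\colon(Y,s^{(k)})\rightarrow(X,r^{(k)})$ is a morphism of solutions. First I would observe that $\iota^{(k)}$ is still the set-theoretic inclusion $Y\hookrightarrow X$ by construction (since $f^{(k)}(x)=f(x)$ in that proposition), hence injective, which immediately realises $(Y,s^{(k)})$ as a subsolution of $(X,r^{(k)})$. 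Concretely one checks that $s^{(k)}=(r^{(k)})|_{Y\times Y}$: this follows because the cabling maps $\lambda^{(k)}_x=\lambda_{kx}$ and $\sigma^{(k)}_x=\sigma^k_x$ are computed from the structure monoid data, and the embedding of $M(Y,s)$ into $M(X,r)$ respects the additive multiples $kx$ and the $\lambda$- and $\sigma$-actions.

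For the second part, assume $k$ is coprime to the Dehornoy class $d$ of $(X,r)$. The cabling operation $(X,r)\mapsto(X,r^{(k)})$ assigns to each subsolution $(Y,s)$ the subsolution $(Y,s^{(k)})$ of $(X,r^{(k)})$, as just shown; call this assignment $\Phi_k$. I would produce an inverse by cabling with the multiplicative inverse of $k$ modulo $d$. Choose a positive integer $l$ with $kl\equiv 1\pmod d$. By Proposition~\ref{cablingofcabling:pro} one has $(r^{(k)})^{(l)}=r^{(kl)}$, and by the remark following that proposition $(X,r^{(kl)})=(X,r^{(1)})=(X,r)$, since the multiplicative monoid of $\mathbb{Z}_d$ acts on the set of cablings and $kl\equiv 1$ acts as the identity. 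The key point is that the Dehornoy class of $(X,r^{(k)})$ again divides $d$ (indeed one checks $\sigma^{(k)}_{dx}=\sigma^{kd}_x=\id$ and likewise for $\lambda$), so that the same arithmetic applies on the $k$-cabled side.

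Applying $\Phi_l$ to $(Y,s^{(k)})$ then yields the subsolution $(Y,(s^{(k)})^{(l)})=(Y,s^{(kl)})=(Y,s)$ of $(X,r^{(kl)})=(X,r)$. Thus $\Phi_l\circ\Phi_k$ and $\Phi_k\circ\Phi_l$ are both the identity on the respective sets of subsolutions, so $\Phi_k$ is a bijection between the subsolutions of $(X,r)$ and those of $(X,r^{(k)})$. I expect the main obstacle to be the bookkeeping in the first part: verifying cleanly that $s^{(k)}$ genuinely equals the restriction $(r^{(k)})|_{Y\times Y}$ rather than merely mapping into $(X,r^{(k)})$. This hinges on the compatibility of the canonical monoid embedding $M(Y,s)\hookrightarrow M(X,r)$ with the additive structure and the $\lambda,\sigma$ actions used to define the cabled maps; once that compatibility is recorded, the functoriality in Proposition~\ref{cablingendofunctor:pro} together with the involution $\Phi_k,\Phi_l$ delivers the bijective correspondence with no further difficulty.
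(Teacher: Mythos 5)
Your proposal is correct and follows essentially the same route as the paper: the subsolution claim is obtained by viewing the inclusion $(Y,s)\hookrightarrow(X,r)$ as a morphism of biquandles and applying Proposition~\ref{cablingendofunctor:pro}, and the bijective correspondence for $\gcd(k,d)=1$ comes from inverting the cabling via Proposition~\ref{cablingofcabling:pro} with $kl\equiv 1\pmod d$. You supply rather more detail than the paper's two-sentence proof (in particular the compatibility check that $s^{(k)}$ is genuinely the restriction of $r^{(k)}$, which the paper leaves implicit), but the underlying argument is the same.
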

    \begin{proof}
        A subsolution of $(X,r)$ corresponds to an embedding of the solution $(Y,s)$ into $(X,r)$. By Proposition \ref{cablingendofunctor:pro}, one obtains an embedding from $(Y,s^{(k)})$ into $(X,r^{(k)})$.
    \end{proof}

    \begin{rem}
        Let $B$ be a finite skew brace and $k$ a positive integer coprime with the Dehornoy class of $B$. Then the solutions $(B,r^{(k)}_B)$ and $(B,r_B)$ are isomorphic. Indeed, the map $f:(B,r^{(k)}_B)\rightarrow (B,r_B)$ with $f(x)=kx$ gives an isomorphism.
    \end{rem}

\section{Decomposability tests via cabling}

In this section, we show, under mild assumptions, that if a solution is square-free then it has to be decomposable (Proposition~\ref{decompnilpot:prop}). For involutive solutions this was conjectured by Gateva-Ivanova (1996) and proved by Rump (2005) \cite{MR2132760}.

In Theorem~\ref{thm:diagonal} we relate the diagonal map of $(X,r)$ and of its cabled solution $(X,r^{(k)})$ we also establish a relation between the orbits in $\mathcal{S}(X,r)$ and $\mathcal{S}(X,r^{(k)})$.

In the spirit of \cite[Section~4]{cablingorig}, Theorem~\ref{thm:diagonal} yields criteria for (in)decomposability.

The following result shows that a quandle with nilpotent Yang--Baxter group has an associated decomposable solution. This result was also shown in \cite[Corollary 3.7]{Darn}. We, however, provide a different, completely group-theoretical proof of this result that will provide the foundation for the proof of Proposition~\ref{decompnilpot:prop}.

\begin{prop}\label{prop:quandledecomp}
    Let $(X,\triangleleft)$ be a quandle with $|X|>1$. If the Yang--Baxter group $G(X,\triangleleft)$ is nilpotent and not isomorphic to $\mathbb{Z}$, then the associated set-theoretic 
 solution $(X,r_{\triangleleft})$ is decomposable.
\end{prop}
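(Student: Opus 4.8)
The plan is to argue by contraposition, translating the statement into a purely group-theoretic fact about the structure group. Recall that the quandle solution $r_\triangleleft(x,y)=(y,x\triangleleft y)$ has structure group with presentation $G(X,\triangleleft)=\langle x\in X\mid xy=y(x\triangleleft y)\rangle$, so that inside $G(X,\triangleleft)$ one reads off $x\triangleleft y=y^{-1}xy$. In particular the generating map $\sigma_y=\cdot\triangleleft y$ is realized by conjugation by $y$, and hence any two generators $x,x'\in X$ lying in a common orbit of $\langle\sigma_y\mid y\in X\rangle=\mathrm{Inn}(X,\triangleleft)$ are conjugate in $G(X,\triangleleft)$. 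Using the characterization recalled earlier (a solution is indecomposable iff $\mathcal{S}$ acts transitively, and for the quandle solution the $\mathcal{S}$-orbits coincide with the $\mathrm{Inn}$-orbits), it therefore suffices to show that if $(X,r_\triangleleft)$ were indecomposable, then $G(X,\triangleleft)\cong\mathbb{Z}$, contradicting the hypothesis.

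So I would assume $(X,r_\triangleleft)$ indecomposable, i.e. $\mathrm{Inn}(X,\triangleleft)$ transitive, whence by the first paragraph all generators $x\in X$ lie in a single conjugacy class of $G:=G(X,\triangleleft)$. Passing to the abelianization $G/[G,G]$, conjugate elements have equal image, so every generator maps to one and the same $\bar{x}_0$; since $X$ generates $G$ and $X$ is finite (so $G$ is finitely generated), the single element $\bar{x}_0$ generates $G/[G,G]$, which is thus cyclic. This is equivalent to $G=\langle x_0\rangle[G,G]$ for any fixed $x_0\in X$. Here the nilpotency hypothesis enters: in a finitely generated nilpotent group $[G,G]\subseteq\Phi(G)$ consists of non-generators, so $G=\langle x_0\rangle[G,G]$ forces $G=\langle x_0\rangle$, and $G$ is cyclic.

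To finish, I would invoke the length homomorphism $\pi\colon G\to\mathbb{Z}$ sending every generator to $1$; it is well defined because each defining relation $xy=y(x\triangleleft y)$ is homogeneous of length two, and surjective because $X\neq\emptyset$. A cyclic group admitting a surjection onto $\mathbb{Z}$ must be infinite cyclic, so $G\cong\mathbb{Z}$, contradicting $G(X,\triangleleft)\not\cong\mathbb{Z}$. Hence $(X,r_\triangleleft)$ is decomposable, and the assumption $|X|>1$ guarantees that a genuine two-part decomposition exists. The only substantive step, and the one I expect to be the crux, is the nilpotency input — that a finitely generated nilpotent group with cyclic abelianization is itself cyclic, via the non-generator argument $G=H[G,G]\Rightarrow G=H$; everything else is bookkeeping that converts the orbit picture into conjugacy relations in $G$. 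This group-theoretic lemma is presumably precisely what will be reused, in the more general non-square-free setting, for Proposition~\ref{decompnilpot:prop}.
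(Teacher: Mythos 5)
Your proof is correct, but it takes a genuinely different route from the paper's. The paper first reduces to the injective case via Theorem~\ref{injdecomp:thm}, assumes indecomposability, and then runs a transitivity argument inside the permutation group: using nilpotency it extracts a Sylow $p$-subgroup acting transitively on a $p$-element quotient set $Y$, finds some $\sigma_{mx}$ acting as a $p$-cycle on $Y$, and derives a contradiction from the fixed point $\sigma_{mx}(x)=\sigma_x^m(x)=x$ supplied by the quandle axiom. Your argument instead converts transitivity of $\mathrm{Inn}(X,\triangleleft)$ into the statement that all generators are conjugate in $G(X,\triangleleft)$ (since $x\triangleleft y=y^{-1}xy$ there), deduces that the abelianization is cyclic, invokes the non-generator property of $[G,G]$ in a finitely generated nilpotent group to conclude $G$ is cyclic, and finishes with the length homomorphism onto $\mathbb{Z}$; each of these steps checks out. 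Your version is shorter, needs no injectivization reduction, and --- since it never uses $x\triangleleft x=x$ --- proves the subsequent rack Corollary directly rather than by reducing it to the quandle case. What the paper's longer argument buys is reusability: its Sylow/fixed-point skeleton is precisely what is adapted in Proposition~\ref{decompnilpot:prop} to square-free solutions with nilpotent $A(X,r)$, where $\sigma_y$ is no longer realized by conjugation in the structure group and the cyclic-abelianization lemma has no obvious analogue. So your closing guess that your group-theoretic lemma is what gets reused there does not match how the paper proceeds, though this has no bearing on the correctness of your proof of the present proposition.
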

\begin{proof}
    We may assume that $(X,r_{\triangleleft})$ is an injective solution (Theorem~\ref{injdecomp:thm}), i.e. the canonical map $X \rightarrow G(X,\triangleleft)$ is injective. Suppose the set-theoretic solution $(X,r_{\triangleleft})$ is indecomposable. Let us write $r_{\triangleleft}(x,y) = (y, \sigma_y(x))$. Then, we can identify the permutation group $\mathcal{G}$ with $\left<\sigma_x^{-1}\mid x \in X \right>$ and since $(X,r_{\triangleleft})$, the action of $\mathcal{G}$ on $X$ is transitive. Consider a maximal intransitive normal subgroup $N$ of $\mathcal{G}$, i.e. every normal subgroup of $\mathcal{G}$ containing $N$ acts transitively on $X$. Let $N'$ be a minimal element in the set of normal subgroups containing $N$. As $\mathcal{G}$ is a homomorphic image of the Yang--Baxter group  $G(X,\triangleleft)$, it follows that $N'/N$ is an elementary abelian $p$-group. Let $\mathcal{G}_p$ denote the Sylow $p$-subgroup of $\mathcal{G}$, then $N'\subseteq \mathcal{G}_pN$, which implies that $\mathcal{G}_p$ acts transitively on $X/N$. Let $P$ be a maximal intransitive normal subgroup of $\mathcal{G}_p$, then there exists, similar to before, a normal subgroup $P'$ containing $P$ such that $|P'/P|=p$ and $P'$ acts transitively on $X/N$. Moreover, it acts transitively on the set $Y$ of $P$-orbits of $X/N$, which implies that $|Y|=p$. Denote by $X_p$ the image of $X$ under the composition of the embedding $X\rightarrow \mathcal{G}$ and the projection on  $\mathcal{G}_p$. As the latter is generated by $X_p$, it follows that there exists an $x \in X$ and a positive integer $m$ such that $\sigma_{mx}$ permutes $Y$. As the latter is of $p$-power order, it follows that it is a $p$-cycle that permutes $Y$. However, considering the fact that $$\sigma_{mx}(x)=\sigma_x^m(x)=x,$$ it follows that $\sigma_{mx}$ leaves the orbit containing $x$ invariant, which is a contradiction.
\end{proof}

\begin{cor}
Let $(X,\triangleleft)$ be a rack. If the Yang--Baxter group $G(X,\triangleleft)$ is nilpotent and is not isomorphic to $\mathbb{Z}$, then the associated solution $(X,r_{\triangleleft})$ is decomposable.
\end{cor}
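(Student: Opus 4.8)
The plan is to reduce the rack statement to the quandle case already handled in Proposition~\ref{prop:quandledecomp}, using injectivization as the bridge. By Theorem~\ref{injdecomp:thm}, the solution $(X,r_{\triangleleft})$ is decomposable if and only if its injectivization $\Inj(X,r_{\triangleleft})$ is decomposable. Since $\Inj(X,r_{\triangleleft})$ is by construction an injective solution and, as recalled in the excerpt, $G(\Inj(X,r_{\triangleleft}))\cong G(X,r_{\triangleleft})=G(X,\triangleleft)$, it suffices to prove that this injective solution is decomposable, knowing that its structure group is still nilpotent and not isomorphic to $\mathbb{Z}$.

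The key observation I would isolate is that \emph{an injective rack is automatically a quandle}. A rack solution is exactly a solution with $\lambda_x=\id$ for all $x$, i.e. $r_{\triangleleft}(x,y)=(y,x\triangleleft y)$, and this property is inherited by the injectivization: since $\lambda$ is identity on the generators of $G(X,\triangleleft)$ and $\lambda$ is a homomorphism, the extended solution $r_G$ also has $\lambda=\id$, so $\Inj(X,r_{\triangleleft})$ is again a rack solution, say the solution $(X',\triangleleft')$ of a rack. To see it is square-free, note that the defining relation coming from $r_{\triangleleft}(x,x)=(x,x\triangleleft x)$ reads $x\cdot x = x\cdot(x\triangleleft x)$ in the structure group; cancelling one factor $x$ (groups are cancellative) gives $i(x\triangleleft x)=i(x)$. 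As the canonical map $i$ is injective on $X'$, this forces $x\triangleleft' x = x$, so $(X',\triangleleft')$ is a quandle.

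It then remains to check the hypotheses of Proposition~\ref{prop:quandledecomp} for $(X',\triangleleft')$. Its structure group is $G(X',\triangleleft')\cong G(X,\triangleleft)$, hence nilpotent and not isomorphic to $\mathbb{Z}$. For the size condition, if $|X'|=1$ then $G(X',\triangleleft')\cong\mathbb{Z}$, contradicting $G(X,\triangleleft)\not\cong\mathbb{Z}$; thus $|X'|>1$. Proposition~\ref{prop:quandledecomp} then yields that $(X',r_{\triangleleft'})=\Inj(X,r_{\triangleleft})$ is decomposable, and applying Theorem~\ref{injdecomp:thm} once more transfers decomposability back to $(X,r_{\triangleleft})$. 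I expect the only point requiring genuine care to be the verification that injectivization preserves the rack (equivalently $\lambda=\id$) structure and the clean identification of injective racks with quandles via the relation $x\cdot x = x\cdot(x\triangleleft x)$ in $G(X,\triangleleft)$; once this observation is in place, the group-theoretic heavy lifting is entirely delegated to the already-proved quandle proposition.
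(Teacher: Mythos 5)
Your proof is correct and follows essentially the same route as the paper: reduce to the injectivization via Theorem~\ref{injdecomp:thm}, observe that the group relation $x\circ x = x\circ(x\triangleleft x)$ forces the injectivization to be square-free (hence a quandle), transport nilpotency of the structure group, and invoke Proposition~\ref{prop:quandledecomp}. Your explicit checks that injectivization preserves the rack property ($\lambda=\id$) and that $|X'|>1$ are details the paper leaves implicit, but the argument is the same.
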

\begin{proof}
    By Theorem~\ref{injdecomp:thm}, $(X,r_{\triangleleft})$ is decomposable if and only if its injectivization is decomposable. Note that in $G(X,\triangleleft)$ one has that $$ x+x=x+\sigma_x(x),$$ which entails that $\Inj(X,r)$ is square-free. As $G(X,r_{\triangleleft})\cong G(X,\triangleleft)\cong G(\Inj(X,r))$, the latter is nilpotent. Hence, the result follows from Proposition~\ref{prop:quandledecomp}.
\end{proof}

The following example on $3$ elements shows that Theorem~\ref{prop:quandledecomp} can not be extended to square-free derived solutions with (super)solvable permutation groups.

\begin{ex}
    Consider the group $S_3$ and the solution $s(\sigma,\tau)=(\tau,\tau^{-1}\sigma\tau)$, i.e. the solution coming from the conjugation quandle on $S_3$. Denote $X=\left\lbrace (12),(23),(13)\right\rbrace$. Then, $(X,s)$ is a square-free indecomposable bijective non-degenerate solution, and $\mathcal{G}(X,s) \cong S_3$ is solvable but not nilpotent.
\end{ex}

\begin{prop}\label{decompnilpot:prop}
    Let $(X,r)$ be a finite solution. If $(X,r)$ is square-free and $A(X,r)$ is nilpotent, then $(X,r)$ is decomposable.
\end{prop}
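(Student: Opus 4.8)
The plan is to reduce to the injective case and then run the group-theoretic argument of Proposition~\ref{prop:quandledecomp} almost verbatim, but carried out inside the full permutation group $\perm(X,r)$ instead of $\langle \sigma_x^{-1}\mid x\in X\rangle$. First I would record what square-freeness buys us. From $r(x,x)=(x,x)$ one reads off $\lambda_x(x)=x$, and then $\sigma_x(x)=\lambda_x\rho_{\lambda_x^{-1}(x)}(x)=\lambda_x\rho_x(x)=x$; in particular the derived rack is a quandle, so $(X,r)$ is a biquandle. By Theorem~\ref{injdecomp:thm} and Proposition~\ref{thm:bddecomp} decomposability is invariant under injectivization, and $\Inj(X,r)$ is again square-free with $G(\Inj(X,r))\cong G(X,r)$, so that $A(\Inj(X,r))$ is again nilpotent. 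Thus I may assume $(X,r)$ is injective, so $X$ embeds in $G(X,r)$, and by the transitivity criterion for indecomposability (\cite[Proposition~6.6]{MR3957824}) it suffices to prove that $\perm(X,r)$ does not act transitively on $X$ when $|X|>1$ (the case $|X|=1$, corresponding to the excluded case $A_g(X,r)\cong\mathbb{Z}$, being trivial). The crucial observation is that the two canonical generators attached to a point $x$, namely $(\sigma_x^{-1},\id,\id)$ and $(\id,\lambda_x,\rho_x^{-1})$, both fix $x$.

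Next I would feed in the nilpotency hypothesis. As $A(X,r)$ is nilpotent, its homomorphic image $\langle \sigma_x^{-1}\mid x\in X\rangle$ is nilpotent; the point is to upgrade this to nilpotency of $\perm(X,r)=\langle \sigma_x^{-1}\mid x\in X\rangle \rtimes \mathcal{G}(X,r)$, i.e.\ to control the factor $\mathcal{G}(X,r)$. Granting this, the argument of Proposition~\ref{prop:quandledecomp} transfers: assuming $\perm(X,r)$ transitive, pick a maximal intransitive normal subgroup $N$ and a minimal transitive normal subgroup $N'\supseteq N$, so that $N'/N$ is an elementary abelian $p$-group; the Sylow $p$-subgroup $\perm(X,r)_p$ then acts transitively on $X/N$, and a maximal intransitive normal subgroup $P\trianglelefteq\perm(X,r)_p$ produces $P\trianglelefteq P'$ with $|P'/P|=p$ acting transitively on the set $Y$ of $P$-orbits of $X/N$. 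Hence $|Y|=p$ and $\perm(X,r)_p$ acts on $Y$ through a nontrivial homomorphism onto $C_p$. Since $\perm(X,r)_p$ is generated by the $p$-parts of the canonical generators, the $p$-part of some generator attached to a point $x$ acts as a nontrivial $p$-cycle on $Y$; but this $p$-part still fixes $x$, hence fixes the image of $x$ in $X/N$ and therefore (as $P$ is normal) the element $P\,[x]\in Y$, while a nontrivial $p$-cycle has no fixed point. This contradiction forces $\perm(X,r)$ to be intransitive, so $(X,r)$ is decomposable.

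The hard part is precisely the nilpotency upgrade. In the quandle situation $\lambda=\id$, so the acting group is literally the nilpotent quotient $\langle\sigma_x^{-1}\mid x\in X\rangle$ of the structure group and there is nothing to prove; here I must show that nilpotency of $A(X,r)$ forces $\mathcal{G}(X,r)$, and hence $\perm(X,r)$, to be nilpotent. I expect this to follow from the skew-brace structure of $\perm(X,r)$ together with square-freeness, which is exactly what excludes transitive non-nilpotent phenomena: in the $S_3$-conjugation quandle the action is transitive but the derived structure group is not nilpotent, so there the hypothesis genuinely fails. A robust alternative, avoiding any claim about $\mathcal{G}(X,r)$, is to argue recursively on the $\sigma$-orbits: the case where $\langle\sigma_x^{-1}\mid x\in X\rangle$ is already transitive is immediate from Proposition~\ref{prop:quandledecomp} applied to the derived quandle, and otherwise one passes to the transitive action of $\mathcal{G}(X,r)$ on the at least two blocks cut out by the $\sigma$-orbits, on which each $\lambda_x$ still fixes the block containing $x$, and repeats the chief-factor argument there. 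A minor additional point is to guarantee that the element realising the $p$-cycle on $Y$ may be chosen among the point-fixing canonical generators rather than an arbitrary word in them, which is exactly what the generation statement for $\perm(X,r)_p$ supplies.
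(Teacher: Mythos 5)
Your reduction to the injective case, the use of square-freeness to produce point-fixing elements, and the final ``$p$-cycle on $Y$ versus fixed point'' contradiction all match the paper's strategy. But there is a genuine gap at the step you yourself flag as the hard part, and your proposed fixes do not close it. The ``nilpotency upgrade'' is false: nilpotency of $A(X,r)$ does not force $\mathcal{G}(X,r)$, let alone $\perm(X,r)$, to be nilpotent. Already for involutive solutions $A(X,r)$ is free abelian, while $\mathcal{G}(X,r)$ can be essentially any involutive Yang--Baxter group (e.g.\ $\Sym(3)$), including non-nilpotent ones for square-free solutions; so no amount of skew-brace structure plus square-freeness will rescue that claim. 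Without nilpotency of $\perm(X,r)$, your assertion that ``$\perm(X,r)_p$ is generated by the $p$-parts of the canonical generators'' fails --- that generation statement is exactly what nilpotency (direct product of Sylows) would give you, and it is the crux, not a ``minor additional point'': it is the only way to guarantee that the element realising the $p$-cycle on $Y$ fixes a point of $X$.

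The paper replaces this with two skew-brace inputs that your fallback route also leaves unproved. First, nilpotency of $A(X,r)$ yields only \emph{solvability} of $\mathcal{G}(X,r)$, which is all one needs for the chief factor $M/N'$ to be elementary abelian. Second --- and this is the irreplaceable step --- the $p$-component $\mathcal{G}_p$ of the \emph{additive} group of the skew brace $\mathcal{G}$ is a strong left ideal, hence a Sylow $p$-subgroup of $(\mathcal{G},\circ)$, and it is \emph{additively} generated by the images $X_p$ of the elements of $X$ under the additive projection. This is what lets one take the element acting nontrivially on $Y$ to be of the form $(\sigma_{mx},\lambda_{mx})$ for an additive multiple $mx$ of a single $x\in X$, at which point square-freeness gives $\sigma_{mx}^{-1}\lambda_{mx}(x)=\sigma_x^{-m}\lambda_x^m(x)=x$ and the contradiction. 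Your alternative (``pass to the action of $\mathcal{G}$ on the blocks cut out by the $\sigma$-orbits and repeat the chief-factor argument'') correctly identifies the paper's first move --- the paper takes $N$ to be a maximal intransitive normal subgroup of $\perm$ containing $\langle\sigma_x^{-1}\rangle$, whose intransitivity comes from Proposition~\ref{prop:quandledecomp} as you say --- but then simply defers the same unproved generation statement to the quotient. To complete the proof you must argue additively in the skew brace $\mathcal{G}$, not multiplicatively in $\perm$.
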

\begin{proof}
    We may assume that $(X,r)$ is an injective solution. Suppose that $(X,r)$ is indecomposable, then $\perm(X,r)$ acts transitively on $X$. By Proposition~\ref{prop:quandledecomp}, the group $\left<\sigma_x^{-1} \mid x \in X\right>$ acts intransitively on $X$. Hence, there exists a maximal intransitive normal subgroup $N$ of $\perm(X,r)$ containing $\left<\sigma_x^{-1}\mid x \in X \right>$. Moreover, identifying $\perm(X,r)/N$ with a suitable homomorphic image of $\mathcal{G}(X,r)$, one obtains that the induced action of $\mathcal{G}(X,r)$ on $X/N$ is transitive. Denote $N'$ the image of $N$ under this identification. Let $M$ be a minimal element in the set of normal subgroups of $\mathcal{G}(X,r)$ containing $N'$. As $A(X,r)$ is nilpotent, it follows that $\mathcal{G}(X,r)$ is a solvable group. Hence, $M/N'$ is an elementary abelian $p$-group. Denote $\mathcal{G}_p=(\mathcal{G},+)_p$ for the $p$-component of $(\mathcal{G},+)$. As the latter is a strong left ideal of $\mathcal{G}$, it is a Sylow $p$-subgroup of $(\mathcal{G},\circ)$. Hence, it contains $M$ and thus $\mathcal{G}_p$ acts transitive on $X/N$. Let $P$ denote a maximal intransitive multiplicative normal subgroup of $\mathcal{G}_p$ and $P'$ a minimal element in the set of multiplicative normal subgroups of  $\mathcal{G}_p$ containing $P$, then $|P'/P|=p$. Moreover, $P'/P$ acts transitively on the set $Y$ of $P$-orbits of $X/N$. Thus, $|Y|=p$. Denote by $X_p$ the image of $X$ under the composition of the embedding $X\rightarrow (\mathcal{G},+)$ and the projection on $\mathcal{G}_p$. Then $X_p$ generates $\mathcal{G}_p$ additively. Clearly, there should exist an element $x\in X$ such that the image of $x$ in $X_p$ acts non-trivially on $Y$. Thus, there exists an $m$ positive integer such that $(\sigma_{mx},\lambda_{mx})$ acts non-trivially on $Y$ and is of $p$-power order, which implies it acts as a $p$-cycle on $Y$. Note, as  $(X,r)$ is square-free, it holds that $$ (\sigma_{mx},\lambda_{mx})*x= \sigma_{mx}^{-1}\lambda_{mx}(x)=\sigma_x^{-m}\lambda_x^m(x)=x.$$ In particular, $(\sigma_{mx},\lambda_{mx})$ leaves the $P$-orbit containing $x$ invariant, which is a contradiction.
    \end{proof}
    \begin{rem}
        Proposition~\ref{decompnilpot:prop} can be completely reformulated in terms of the solution. Indeed, it is straightforward from \cite[Theorem~2.20]{MR3957824} that $A(X,r)$ is nilpotent if and only if the derived rack is of finite multipermutation level. Hence, Proposition~\ref{decompnilpot:prop} says that a square-free solution with left derived rack of finite multipermutation level is necessarily decomposable. We also refer to \cite{MR4457900} were more results on solutions with nilpotent Yang--Baxter group are obtained.
    \end{rem}
    
The following result may be used to derive numerical conditions for a solution to be indecomposable. 
    \begin{thm}\label{thm:diagonal}
        Let $(X,r)$ be a biquandle and $k$ a positive integer. Then, the (left) diagonal map of $(X,r^{(k)})$ is $q^k$. Moreover, if $x \in X$ lies in a $\mathcal{S}(X,r)$ orbit of size $m$ and a $\mathcal{S}(X,r^{(k)})$-orbit of size $m'$. Denote $m_k$ the maximal divisor of $m$ coprime to $k$. Then $m'$ is a multiple of $m_k$.
    \end{thm}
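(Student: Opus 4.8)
The plan is to prove the two assertions by different means: the diagonal map by an explicit factorisation of $kx$ inside the structure monoid, and the orbit statement by comparing the two permutation groups with the help of the first assertion.

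For the diagonal map I would begin by recording that, by the construction in Proposition~\ref{prop:cabledexist} and the definition of $r^{(k)}$, the left constants of the cabled solution are $\lambda^{(k)}_x=\lambda_{kx}$, so that the left diagonal map of $(X,r^{(k)})$ sends $x$ to $\lambda_{kx}^{-1}(x)$; the task is thus to identify this with $q^k(x)$, where $q(x)=\lambda_x^{-1}(x)$. The crux is the factorisation
\[
 kx \;=\; x\circ q(x)\circ q^2(x)\circ\cdots\circ q^{k-1}(x)\qquad\text{in }(M,\circ),
\]
which I would prove by induction on $k$. Using $a\circ b=a+\lambda_a(b)$, equivalently $a+c=a\circ\lambda_a^{-1}(c)$, one gets $(k+1)x=x+kx=x\circ\lambda_x^{-1}(kx)$; since $\lambda_x^{-1}$ is an automorphism of $(M,+)$ it distributes over the $k$-fold sum, giving $\lambda_x^{-1}(kx)=k\,q(x)$, to which the induction hypothesis (applied to the element $q(x)$) gives $k\,q(x)=q(x)\circ q^2(x)\circ\cdots\circ q^{k}(x)$. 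Applying the homomorphism $\lambda\colon(M,\circ)\to\Aut(M,+)$ then yields $\lambda_{kx}=\lambda_x\lambda_{q(x)}\cdots\lambda_{q^{k-1}(x)}$, and $\lambda_{kx}^{-1}(x)$ telescopes: reading the inverse factors from the right, $\lambda_x^{-1}(x)=q(x)$, then $\lambda_{q(x)}^{-1}(q(x))=q^2(x)$, and so on, so that $\lambda_{kx}^{-1}(x)=q^k(x)$. This settles the first claim.

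For the orbits I would first compare the groups acting on $X$. By Proposition~\ref{cablinginjective:pro} one has $\sigma_{kx}=\sigma_x^{k}$, and the factorisation above gives $\lambda_{kx}=\lambda_x\lambda_{q(x)}\cdots\lambda_{q^{k-1}(x)}$; hence each generator $\sigma_x^{-k},\lambda_{kx}$ of the image of $\mathcal{S}(X,r^{(k)})$ in $\Sym(X)$ is a word in the generators $\sigma_x^{-1},\lambda_x$ of the image of $\mathcal{S}(X,r)$. Consequently the cabled permutation group sits inside $\mathcal{S}(X,r)$ and the $\mathcal{S}(X,r^{(k)})$-orbit $O'$ of $x$ is contained in its $\mathcal{S}(X,r)$-orbit $O$, so that $m'\le m$. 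When $k$ is coprime to the Dehornoy class the subsolution correspondence forces $m'=m$ and $m_k\mid m'$ is immediate, so the substance is the general case. The divisibility $m_k\mid m'$ is where the coprimality of $k$ must be exploited, and this is the step I expect to be the main obstacle. The mechanism I would try to harness is the cyclic one supplied by the first assertion: since $q^{(k)}=q^k$ and $q$ restricts to a permutation of $O$ (because $q(y)=\lambda_y^{-1}(y)\in\mathcal{S}(X,r)\cdot y$), passing from $q$ to $q^k$ shortens each cycle only by a factor of $\gcd(\cdot\,,k)$, so the part of each cycle length coprime to $k$ is preserved, which is precisely the origin of the factor $m_k$. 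The delicate point, on which I would concentrate, is to transfer this statement about the cyclic action of the diagonal map to the full group orbits $m$ and $m'$: one must argue prime by prime that, for $p\nmid k$, the $p$-part of $m$ is already governed by the $\langle q\rangle$-dynamics on $O$ and is not destroyed when the generators $\sigma_x^{-1},\lambda_x$ are replaced by their cabled versions, the coprimality $p\nmid k$ being what keeps the $k$-power operation bijective on the relevant $p$-primary pieces.
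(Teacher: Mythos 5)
Your treatment of the first assertion is correct and coincides with the paper's: the factorisation $kx = x\circ q(x)\circ\cdots\circ q^{k-1}(x)$ in $(M,\circ)$, obtained from $a+c=a\circ\lambda_a^{-1}(c)$ together with the fact that $\lambda_x^{-1}$ is an automorphism of $(M,+)$, followed by the telescoping of $\lambda_{kx}^{-1}(x)$, is exactly the paper's argument (written out in slightly more detail).

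The second assertion, however, you do not prove. You correctly note that $\mathcal{S}(X,r^{(k)})$ acts through $\mathcal{S}(X,r)$, so the cabled orbit of $x$ is contained in the original one and $m'\le m$; but you then declare the divisibility $m_k\mid m'$ to be ``the main obstacle'' and only describe a mechanism you ``would try'', so the essential step is absent. Moreover, the mechanism you propose cannot be made to work: you want the part of $m$ coprime to $k$ to be ``governed by the $\langle q\rangle$-dynamics on $O$'', so that replacing $q$ by $q^k$ only shortens cycles by factors of $\gcd(\cdot,k)$. But the size of the orbit of $x$ under the full group $\mathcal{S}(X,r)$ bears no general relation to the cycle lengths of the single permutation $q$ on that orbit. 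Indeed, for any square-free biquandle one has $q=\mathrm{id}$, all of whose cycles have length $1$, yet indecomposable square-free biquandles with $m=|X|>1$ exist --- the paper's own example of the conjugation quandle on the three transpositions of $S_3$ has $q=\mathrm{id}$ and a single orbit of size $3$. So no prime-by-prime information about $m$ can be read off from $q$, and the coprimality of $k$ cannot enter through the cyclic action of the diagonal map.

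The paper's proof of the orbit statement is of a different nature and is the content you are missing: it applies the orbit--stabilizer theorem to both actions, observes that $\mathcal{S}(X,r^{(k)})$ is the sub-skew brace of $\mathcal{S}(X,r)$ generated by the $k$-multiples (taken in the additive group of $\mathcal{S}(X,r)$) of the generators of $\mathcal{S}(X,r)$ --- from which it deduces $|\mathcal{S}(X,r^{(k)})|=|\mathcal{S}(X,r)|/\alpha$ with every prime divisor of $\alpha$ dividing $k$ --- and notes that $\mathcal{S}(X,r^{(k)})_x$ is a subgroup of $\mathcal{S}(X,r)_x$, so that $m'=mt/\alpha$ for some positive integer $t$; since $\gcd(m_k,\alpha)=1$, this yields $m_k\mid m'$. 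This index bookkeeping in the permutation skew brace, rather than any dynamical property of $q$, is what makes the coprimality hypothesis bite, and none of it appears in your sketch.
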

    \begin{proof}
        Note that for any $x \in X$, we have that $kx = x \circ q(x) \circ \cdots \circ q^{k-1}(x)=x \circ ((k-1)q(x))$. Hence, $$\lambda_{kx}^{-1}(x) = \lambda_{(k-1)q(x)}^{-1}(q(x)).$$ Continuing this process, one obtains that $$ \lambda_{kx}^{-1}(x)=q^k(x),$$ which proves the first statement. 

        Let $x \in X$. The stabilizer of $x$ under the action of $\mathcal{S}(X,r)$ is denoted by $\mathcal{S}(X,r)_x$. Due to the orbit-stabilizer theorem, one has that $|\T(X,r)|=m|\T(X,r)_x|$ and $|\T(X,r^{(k)})|=m'|\T(X,r^{(k)})_x|$. Furthermore, as $\T(X,r^{(k)})$ is generated by $\sigma_{ky}$ and $(\lambda_{kz},\sigma_{kz})$ for $y,z \in X$, one sees that $\T(X,r^{(k)})$ is a subskew brace of $\T(X,r)$. Moreover, these generators are $k$-multiples of the generators of $\T(X,r)$. Hence, $|T(X,r^{(k)})|= |\T(X,r)|/\alpha$ for some $\alpha$ whose prime divisors divide $k$. One sees that the stabilizer $\T(X,r^{(k)})_x$ is a multiplicative subgroup of $\T(X,r)_x$. Thus, for some positive integer $t$, one has that $|\T(X,r)_x|/t=|\T(X,r^{(k)})_x|$. Thus, $$m'=|\T(X,r^{(k)})|/|\T(X,r^{(k)})_x|= \frac{|\T(X,r)|t}{\alpha |\T(X,r)_x|}= \frac{mt}{\alpha}.$$ Recall that $\alpha$ is a divisor of $m$, whose prime divisors only divide $k$. Hence, the latter is a multiple of $m_k$, showing the result.
    \end{proof}

As a corollary we obtain the non-involutive version of \cite[Theorem~A]{MR4565655}.
    \begin{cor}
        Let $(X,r)$ be a solution. If $A(X,r)$ is nilpotent and $\textup{gcd}(|X|,|q|)=1$, then $(X,r)$ is decomposable.
    \end{cor}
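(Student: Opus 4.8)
The plan is to argue by contradiction: assume $(X,r)$ is indecomposable and manufacture from it a \emph{square-free} indecomposable solution whose derived data is still nilpotent, which is forbidden by Proposition~\ref{decompnilpot:prop}. The engine is cabling by $k=|q|$: by Theorem~\ref{thm:diagonal} cabling sends the diagonal map $q$ to $q^{|q|}=\id$, and a biquandle with trivial diagonal map is square-free. Since cabling is only defined on biquandles, I would first replace $(X,r)$ by its associated biquandle, and the coprimality hypothesis is precisely what guarantees that cabling does not destroy indecomposability.

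First I would reduce to the biquandle case. By Proposition~\ref{thm:bddecomp}, $(X,r)$ is decomposable if and only if $\textup{BQ}(X,r)$ is, so supposing $(X,r)$ indecomposable we may work with $\textup{BQ}(X,r)$. The delicate point is transferring the numerical hypothesis. The canonical surjection $X\to\textup{BQ}(X,r)$ is a morphism of solutions, hence equivariant for the surjection $\mathcal{S}(X,r)\to\mathcal{S}(\textup{BQ}(X,r))$; as $(X,r)$ is indecomposable, $\mathcal{S}(X,r)$ acts transitively on both sets, so the fibres of this surjection form a system of blocks of imprimitivity, all of equal cardinality. Consequently $|\textup{BQ}(X,r)|$ divides $|X|$. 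Moreover the diagonal map descends along the morphism, so writing $\ov q$ for the diagonal map of $\textup{BQ}(X,r)$ one has $\ov q^{\,|q|}=\id$, i.e. $|\ov q|$ divides $|q|$. Therefore $\gcd(|\textup{BQ}(X,r)|,|\ov q|)$ divides $\gcd(|X|,|q|)=1$. Finally, nilpotency of $A$ passes to this quotient because, by the Remark following Proposition~\ref{decompnilpot:prop}, it is equivalent to the derived rack having finite multipermutation level, a property inherited by quotient racks. Thus we may assume $(X,r)$ is itself an indecomposable biquandle with $A(X,r)$ nilpotent and $\gcd(|X|,|q|)=1$.

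Now set $k=|q|$ and consider $(X,r^{(k)})$. By Theorem~\ref{thm:diagonal} its diagonal map is $q^{k}=\id$, so $\lambda_x(x)=x$ for all $x$; combined with $\sigma_x(x)=x$ (Proposition~\ref{cablinginjective:pro}, the cabled solution is again a biquandle) this gives $\rho_x(x)=\lambda_x^{-1}(x)=x$, so $(X,r^{(k)})$ is square-free. Nilpotency of $A$ is preserved by cabling, as cabling the biquandle cables the derived rack and thus preserves its finite multipermutation level, so $A(X,r^{(k)})$ is nilpotent. It remains to see that $(X,r^{(k)})$ stays indecomposable: applying the orbit estimate of Theorem~\ref{thm:diagonal} to the single $\mathcal{S}(X,r)$-orbit $X$ of size $m=|X|$, the hypothesis $\gcd(|X|,k)=\gcd(|X|,|q|)=1$ forces $m_k=|X|$, so the $\mathcal{S}(X,r^{(k)})$-orbit of any point has size a multiple of $|X|$, hence equal to $|X|$. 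But then Proposition~\ref{decompnilpot:prop} declares $(X,r^{(k)})$ decomposable, contradicting indecomposability; therefore $(X,r)$ was decomposable.

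The step I expect to be the main obstacle is not the cabling computation but the reduction: ensuring that $\gcd(|X|,|q|)=1$ survives the passage to $\textup{BQ}(X,r)$. This is exactly what the block-of-imprimitivity observation supplies, yielding $|\textup{BQ}(X,r)|\mid|X|$, and it is the single place where indecomposability must be invoked \emph{before} cabling. Everything downstream is either a one-line identity (trivial diagonal map together with the biquandle axiom gives square-freeness) or a direct appeal to results already in hand (Theorem~\ref{thm:diagonal} for both the diagonal map and the orbit bound, and the stability of nilpotency of $A$ under cabling).
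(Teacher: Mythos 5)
Your proposal is correct and follows essentially the same route as the paper: reduce to a biquandle quotient, cable by $|q|$ so that Theorem~\ref{thm:diagonal} makes the diagonal map trivial and the cabled solution square-free, and conclude via Proposition~\ref{decompnilpot:prop}, using the coprimality hypothesis to keep the cabled solution indecomposable. You reduce to $\textup{BQ}(X,r)$ where the paper reduces to the injectivization, and you spell out two points the paper leaves implicit (that $\gcd(|X|,|q|)=1$ survives the reduction, via the block argument, and that the orbit bound of Theorem~\ref{thm:diagonal} preserves indecomposability under cabling), but these are refinements of the same argument rather than a different one.
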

    \begin{proof}
        By Theorem~\ref{injdecomp:thm}, the solution $(X,r)$ is indecomposable if and only if its injectivization is indecomposable. Hence, we may restrict to injective solutions. Then, one notices that the diagonal map of the $|q|$-cabled solution $(X,r^{(|q|)})$ is the identity. Moreover, $A(X,r^{|q|})$ is nilpotent. Indeed, $A(X,r^{|q|})$ modulo its center is a subgroup of $\left< \sigma^{-1}_{|q|x} \mid x \in X\right>$. This, in turn, is a subgroup of $\left< \sigma_x^{-1} \mid x \in X \right>$ and isomorphic to $A(X,r)$ modulo its center. Hence, by Proposition~\ref{decompnilpot:prop}, the solution is decomposable.
    \end{proof}

The permutation $q$ splits $X$ into orbits. This induces a partition of $n =|X|$, which we call the $q$-partition of $(X,r)$.
With the same proof as in the previous corollary considering the $p^t$-cabling, for a suitable $t$, we also obtain.
\begin{cor}
    Let $(X,r)$ be an indecomposable solution $(X, r)$ with multipermutation derived solution of size $ps$, where $p \neq s$ are prime. Then its $q$-partition cannot contain a term $t$ satisfying $(p-1)s<t<ps$ and $\gcd(t,p)=1$.
\end{cor}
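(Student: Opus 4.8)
The plan is to argue by contradiction, following the blueprint of the preceding corollary but cabling by a power of $p$ instead of by $|q|$. As in that proof, Theorem~\ref{injdecomp:thm} lets us reduce to the case that $(X,r)$ is injective, hence a biquandle, so that the cabling machinery of this section applies; the natural morphism to the injectivization intertwines the diagonal maps, so I would use it to transport the hypotheses and the offending $q$-orbit into this setting. Assume then, for contradiction, that the $q$-partition contains an orbit $O$ of size $t$ with $(p-1)s<t<ps$ and $\gcd(t,p)=1$. Fix a positive integer $a$ such that $p^a$ is a multiple of the $p$-part of $|q|$, and pass to the $p^a$-cabled biquandle $(X,r^{(p^a)})$ (a biquandle by Proposition~\ref{cablinginjective:pro}), whose diagonal map is $\widetilde q=q^{p^a}$ by Theorem~\ref{thm:diagonal}. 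The choice of $a$ makes $|\widetilde q|$ coprime to $p$, and, exactly as in the preceding corollary, $A(X,r^{(p^a)})$ is again nilpotent.

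The first substantive step is to show that $(X,r^{(p^a)})$ is still indecomposable. Since $(X,r)$ is indecomposable, $\mathcal{S}(X,r)$ is transitive, so every point lies in a single $\mathcal{S}(X,r)$-orbit of size $m=ps$; as the largest divisor of $ps$ coprime to $p^a$ is $m_{p^a}=s$, Theorem~\ref{thm:diagonal} forces every $\mathcal{S}(X,r^{(p^a)})$-orbit to have size divisible by $s$. Because $\gcd(p^a,t)=1$, the $t$-cycle of $q$ on $O$ stays a single $t$-cycle under $\widetilde q=q^{p^a}$, so $O$ is one $\widetilde q$-orbit of size $t$; and since $\widetilde q(y)=\lambda_y^{-1}(y)$ lies in the $\mathcal{S}(X,r^{(p^a)})$-orbit of $y$, the whole of $O$ sits inside a single $\mathcal{S}(X,r^{(p^a)})$-orbit $S$. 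Now $s\mid |S|$, $|S|\ge t>(p-1)s$ and $|S|\le ps$ leave only $|S|=ps$, so $\mathcal{S}(X,r^{(p^a)})$ is transitive and $(X,r^{(p^a)})$ is indecomposable.

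The second step uses this indecomposability to control $|\widetilde q|$. In $(X,r^{(p^a)})$ the $\widetilde q$-orbits partition a set of size $ps$, one of them being $O$ of size $t$; hence all remaining $\widetilde q$-orbits together have size $ps-t<ps-(p-1)s=s$, so each of them has size strictly less than $s$. Since $s$ is prime, no orbit of size $<s$ can be divisible by $s$, and $t$ itself is not divisible by $s$ because the interval $((p-1)s,ps)$ contains no multiple of $s$; therefore $s\nmid|\widetilde q|$. Combined with $p\nmid|\widetilde q|$ this gives $\gcd(|X|,|\widetilde q|)=\gcd(ps,|\widetilde q|)=1$. The preceding corollary, applied to the nilpotent solution $(X,r^{(p^a)})$, then declares it decomposable, contradicting the previous step; hence no orbit $O$ of the forbidden size exists.

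I expect the crux to be the two-sided control of $|\widetilde q|$: cabling by $p^a$ eliminates the prime $p$ automatically, but eliminating $s$ is precisely where the hypothesis $(p-1)s<t<ps$ enters, through the orbit-sum bound that squeezes every $\widetilde q$-orbit other than $O$ below $s$. A second point deserving care is the reduction to a biquandle: one must confirm that passing to the injectivization preserves indecomposability (Theorem~\ref{injdecomp:thm}), nilpotency of $A(X,r)$, and the size of the orbit $O$, so that the offending term of the $q$-partition survives intact in a setting where cabling is defined.
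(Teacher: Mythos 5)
Your proposal is correct and follows exactly the route the paper intends: the paper only states that the result follows ``with the same proof as in the previous corollary considering the $p^t$-cabling, for a suitable $t$,'' and your argument --- cabling by a power of $p$ to kill the $p$-part of $|q|$, using Theorem~\ref{thm:diagonal} to show the cabled solution stays indecomposable, and then squeezing the remaining $\widetilde q$-orbits below $s$ to get $\gcd(ps,|\widetilde q|)=1$ and invoke the previous corollary --- is precisely the fleshed-out version of that sketch. The one loose end you flag (that injectivization preserves $|X|=ps$ and the offending $q$-orbit) is equally unaddressed in the paper's own one-line proof, so it is not a defect of your approach relative to theirs.
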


Analogously, replacing indecomposable involutive with indecomposable with multipermutation derived solution, one obtains the non-involutive version of Theorems~D and E in \cite{cablingorig}.
 
\section*{Acknowledgment}
We gratefully acknowledge the partial support of Fonds voor Wetenschappelijk Onderzoek (Flanders) -- Krediet voor wetenschappelijk verblijf in Vlaanderen grant V512223N (Ilaria Colazzo), and the London Mathematical Society, joint research group 32312, which enabled parts of this collaboration to take place.

\bibliographystyle{abbrv}
\bibliography{refs}
\end{document}